\numberwithin{equation}{section}
\title{The action of component groups on irreducible components of Springer fibers}
\begin{document}
\author{Do Kien Hoang }
%\affil{Yale University, New Haven, 06520 CT, USA, Partially supported by NSF grant DMS-2001139\\ Email \href{mailto:dokien.hoang@yale.edu}{dokien.hoang@yale.edu}}
\date{}
\maketitle
\begin{abstract}
Let $G$ be a simple Lie group and $e \in \fg$ a nilpotent element. Let $A_e := Z_G(e)/Z_G(e)^{\circ}$ denote the component group of its centralizer, and let $\Irr(\spr)$ denote the set of irreducible components of the Springer fiber over $e$. The group $A_e$ acts naturally on $\Irr(\spr)$. We classify the stabilizers in this action when $\fg$ is of classical type. Our approach combines combinatorial properties of signed domino tableaux with the geometry of a torus-fixed variety to describe the stabilizers in terms of noncrossing partitions. For exceptional $\fg$, we give an explicit description of $\Irr(\spr)$ as an $A_e$-set. These results imply and generalize a conjecture of Lusztig and Sommers. They also give evidence for a connection (first proposed by Lusztig) between Springer fibers, Kazhdan–Lusztig cells, and the finite asymptotic Hecke algebra.
\end{abstract}

\section{Introduction}
Let $G$ be a simple algebraic group. Let $\fg$ be the Lie algebra of $G$. Write $\pi: \tilde{\fg}\rightarrow \fg$ for the Grothendieck-Springer resolution of $\fg$. Let $e$ be a nilpotent element of $\fg$. Let $\spr:= \pi^{-1}(e)$ be the Springer fiber over $e$. The variety $\spr$ is connected but not irreducible in general. We write $\Irr(\spr)$ for the set of its irreducible components. 

Let $Z_G(e)$ be the centralizer of $e$ in $G$, and let $A_e:= Z_G(e)/Z_G(e)^{o}$ be its component group. It is known that $A_e$ takes the form $(\cyclic{2})^{\oplus k}$ for some $k$ when $\fg$ is of type B, C, or D (\cite[Section 6]{collingwood1993nilpotent}). For exceptional $\fg$, we assume that $G$ is of adjoint type. Then $A_e$ is a symmetric group $S_i$ for $i\leqslant 5$. The group $Z_G(e)$ naturally acts on $\spr$, so $A_e$ acts on $\Irr(\spr)$. This action of $A_e$ on $\Irr(\spr)$ gives the space $H^{top}(\spr)$ the structure of an $A_e$-module, here we use $H^{top}$ for the top cohomology group with complex coefficient. This module structure plays an important role in the Springer correspondence and can be described explicitly (see, e.g., \cite[Section 13.3]{carter1993finite}). However, the structure of $\Irr(\spr)$ as an $A_e$-set may not be fully recovered from the $A_e$-module $H^{top}(\spr)$ in general. We aim to obtain more information about $\Irr(\spr)$, the details are as follows.

We first consider $\fg$ of classical type. In type A, the group $A_e$ always acts trivially on $H^{top}(\spr)$ (see, e.g., \cite[Section 3.6]{CG}), so we do not consider $\fg$ of type A in this paper. For type B,C, and D, Van Leeuwen, in his thesis \cite{VanL}, gives a parameterization of the set $\Irr(\spr)$ in terms of signed domino tableaux and describes how one can read the action of $A_e$ on these tableaux. This construction is inductive. The algorithm is rather involved, so several questions remain about this action.

First, we want to know which subgroups $A\subset A_e$ appear as stabilizers. Write $\Stab(e)$ for the set of these subgroups. Second, given $A\in \Stab(e)$, we want to know how many orbits of the form $A_e/A$ appear in $\Irr(\spr)$. These questions also make sense for $\fg$ of exceptional types. A slight change is that $A_e$ may not be commutative, so we consider elements of $\Stab(e)$ as conjugacy classes of subgroups in $A_e$.

This paper provides explicit answers to the first question for classical $\fg$. We parameterize the set $\Stab(e)$ by certain types of noncrossing partitions. These partitions have been studied extensively in enumerative and algebraic combinatorics (see, e.g., \cite{Simion2000}). The result for $\fg$ of type C is as follows. Write $\lambda$ for the partition of $e$. Assume that the distinct even parts of $\lambda$ are $2x_1>...>2x_\ell$ and that the multiplicity of $2x_i$ is $r_i$. Let $B_\lambda$ be the set $\{0,2x_1,...,2x_\ell\}$. 
\begin{thm}[\cref{main thm}]
    There is a bijection between $\Stab(e)$ and the set of noncrossing partitions $\cL=\{L_0,...L_k\}$ of $B_\lambda$ that satisfy $0\in L_0$ and that the numbers $r(L_i)= \sum_{2x_{j}\in L_i}r_j$ are even for $1\leqslant i\leqslant k$.
\end{thm}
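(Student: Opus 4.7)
The plan is to use Van Leeuwen's parameterization of $\Irr(\spr)$ by signed domino tableaux of shape $\lambda$, together with his explicit description of the $A_e$-action. For type C, $A_e \cong (\cyclic{2})^{\oplus \ell}$ has one generator $\sigma_j$ for each distinct even part $2x_j$, and $\sigma_j$ acts on a signed domino tableau by toggling the sign labels on the dominoes associated with $2x_j$. Thus, for any tableau $T$, the stabilizer $\mathrm{Stab}_{A_e}(T)$ is an $\mathbb{F}_2$-subspace of $A_e$, and the problem reduces to identifying which subspaces arise as $T$ varies.

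As a first structural step, I would prove that every such stabilizer admits a basis of characteristic vectors of pairwise disjoint subsets $L_1,\dots,L_k \subset \{1,\dots,\ell\}$; that is, $\mathrm{Stab}_{A_e}(T) = \bigoplus_{i=1}^{k} \mathbb{F}_2 \cdot \sigma_{L_i}$ where $\sigma_S := \prod_{j\in S}\sigma_j$. Disjointness is the key constraint and should reflect how Van Leeuwen's sign data organizes dominoes into ``chunks'' that must be toggled together but do not mix across different index sets. Adjoining the ``free'' block $L_0 \ni 0$ consisting of the indices not covered by any $L_i$ then produces a candidate set partition $\mathcal{L}(T)$ of $B_\lambda$. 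The evenness condition on $r(L_i)$ is forced because realizing $\sigma_{L_i}$ as a symmetry of $T$ requires pairing up the dominoes of total multiplicity $r(L_i)$ contributed by the parts $2x_j$ with $j \in L_i$, and such a pairing exists only when this total is even.

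Next I would establish the noncrossing property from the geometry of the tableau. Dominoes of size $2x_j$ occupy nested regions of the Young diagram determined by the order $2x_1 > \dots > 2x_\ell$, so a joint flip $\sigma_{L_i}$ can be a symmetry of $T$ only when the parts in $L_i$ are ``consecutive'' after contracting the inner blocks; an interleaving index from another block would obstruct the required pairing between its associated dominoes. For surjectivity, given any noncrossing partition $\mathcal{L}$ satisfying the two conditions, I would construct a signed domino tableau realizing $\mathcal{L}$ by pairing dominoes block-by-block, working from the outermost block inward so that the pairings prescribed by an inner block can be carried out within the ``holes'' left by an outer block. The main obstacle will be the noncrossing verification and this matching construction: Van Leeuwen's algorithm is recursive with nontrivial interplay between cores and corners of $\lambda$, so I expect both directions to proceed by induction on $\ell$, peeling off the outermost even part and applying the inductive hypothesis to the residual shape, supported by a structural lemma on how sign-flip dependencies propagate between nested layers.
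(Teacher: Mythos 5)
Your first structural step is false as stated, and it is the crux of the whole argument. The stabilizers arising here are \emph{not} of the form $\langle \sigma_{L_1},\dots,\sigma_{L_k}\rangle$ for pairwise disjoint sets $L_i$ with $\sigma_{L}=\prod_{j\in L}t_j$. By Van Leeuwen's description (\cref{easy cor}), the stabilizer of a tableau is determined by the partition of $B_\lambda$ into fibers of the map $b_T$ onto the set of open clusters: a block $L$ not containing $0$ contributes the full even-weight subgroup $\langle t_it_j : 2x_i,2x_j\in L\rangle$, which has dimension $|L|-1$, not the single characteristic vector $\sigma_L$. Concretely, for $\lambda=(100^3,38^3,16^2)$ the subgroup $\langle t_1t_2,t_2t_3\rangle$ occurs as a stabilizer (block $\{100,38,16\}$ with $\{0\}$ alone, as in \cref{example stab}); its three nonidentity elements correspond to the index sets $\{1,2\},\{2,3\},\{1,3\}$, no two of which are disjoint, so it admits no basis of characteristic vectors of disjoint sets. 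Once the normal form is corrected, the question becomes: which partitions of $B_\lambda$ arise as the open-cluster partition of an admissible tableau? The paper settles the necessity half by induction on the number of dominoes, tracking how the NCP diagram of $b_{T^{n-1}}$ changes when the $n$-th domino (of type $N$, $I+$, or $I-$) is added; your ``nested regions'' and ``pairing'' heuristics would have to be turned into exactly this inductive bookkeeping, since both the evenness of $r(L_i)$ and the noncrossing property are properties of the cluster-merging recursion rather than of any static pairing of dominoes.

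For the converse (every even NCP is realized), your plan of building a tableau block-by-block from the outside in is a genuinely different route from the paper's, which passes to geometry: it first reduces to the subpartition $\lambda^0$ of even parts via an injection of $A_e$-sets $\Irr(\cB_{e_0})\hookrightarrow\Irr(\spr)$ (proved by a dimension count on attracting loci of the $T_h$-fixed locus $\fix$), and then realizes each $A_\cL$ as the stabilizer of a connected component of $\fix$ assembled from chains of nested even-length intervals and orthogonal Grassmannians. A direct combinatorial construction of an admissible tableau with prescribed open-cluster partition may well be possible and would be more elementary, but as written your proposal contains no actual construction and no argument that the inner blocks can always be placed admissibly inside the ``holes'' left by the outer ones, so this half remains a gap.
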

For exceptional $\fg$, the first question was answered in \cite{DLP} for distinguished $e$ and in \cite{Sommers2006} for general $e$. For the second question, we propose a conjecture that offers a partial answer in terms of Kazhdan–Lusztig cell theory. This conjecture applies to both classical and exceptional types. To state it, we first recall the relation between KL cells and nilpotent orbits.

The two-sided cells in $W$ are in bijection with the special nilpotent orbits in $\fg$ (\cite{Lusztig1979}). This distinguished class of nilpotent orbits plays a pivotal role in several classical problems in representation theory, including the classification of primitive ideals (\cite{Barbasch1982}) and of irreducible representations of reductive groups over finite fields (\cite{Lusztig1984}). For classical $\mathfrak{g}$, a nilpotent orbit is special if and only if its corresponding partition $\lambda$ has the property that its transpose $\lambda^\intercal$ also corresponds to a nilpotent orbit of $\mathfrak{g}$.

Let $e \in \fg$ be a special nilpotent element, and let $c_e \subset W$ be the corresponding two-sided cell. Lusztig has assigned to the two-sided cell $c_e \subset W$ a quotient $\bar{A}(e)$ of $A_e$, known as Lusztig’s canonical quotient. This quotient plays a pivotal role in describing the constructible representations of $W$, also known as left cell representations. The left cell representations associated with $c_e$ and are classified by Lusztig using certain subgroups $H_\sigma \subset \bar{A}(e)$ (see \cite{Lusztig1982} and \cite{leading}).

Consider the $\bar{A}(e)$-set 
\[
Y= \bigsqcup (\bar{A}(e)/H_\sigma),
\]
where $\sigma$ runs over all left cells in $c_e$. We state the following conjecture.

\begin{conj} \label{main conj}
    Let $e \in \fg$ be a special nilpotent element. Let $K_e$ denote the kernel of the natural map $A_e \twoheadrightarrow \bar{A}(e)$. Then $\Irr(\spr)/K_e$ is isomorphic to $Y$ as a $\bar{A}(e)$-set.
\end{conj}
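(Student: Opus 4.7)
The plan is to verify the conjecture by identifying, on both sides, the multiset of isotropy subgroups of $\bar A(e)$ up to conjugacy, since a finite $G$-set is determined by this data. I would split the argument according to the structure of $\bar A(e)$: for classical $\fg$, it is an elementary abelian $2$-group, while for exceptional $\fg$, it is $S_i$ with $i\leqslant 5$.

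For classical $\fg$, the abelian structure of $\bar A(e)$ implies that a $\bar A(e)$-set is already determined by its permutation character, since the characters $\mathrm{Ind}_H^{\bar A(e)}\mathbf{1}$ for distinct subgroups $H$ are linearly independent. I would therefore compute the character of $\mathbb{C}[\Irr(\spr)/K_e]$ using \cref{main thm}: the noncrossing-partition parametrization of $\Stab(e)$, combined with the explicit surjection $A_e\twoheadrightarrow\bar A(e)$ described in \cite{Sommers2006}, gives for each subgroup $\bar H\subset\bar A(e)$ an expression for the number of $\bar A(e)$-orbits of type $\bar A(e)/\bar H$ on $\Irr(\spr)/K_e$. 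The character of $\mathbb{C}[Y]$ is read directly off Lusztig's classification of constructible representations in types B, C, D from \cite{Lusztig1982}. Matching the two then becomes a combinatorial identity relating noncrossing partitions of $B_\lambda$ to the signed domino tableaux cell invariants governing left cells in classical Weyl groups, which I would verify directly using Van Leeuwen's inductive description.

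For exceptional $\fg$, I would use the descriptions of $\Stab(e)$ from \cite{DLP} and \cite{Sommers2006} to enumerate the stabilizers appearing in $\Irr(\spr)$, project them through $K_e$, and compare the resulting multiset against the family $\{H_\sigma\}$ ranging over left cells $\sigma$ in $c_e$ as tabulated in \cite{Lusztig1982}. Here the character of $H^{top}(\spr)$ coming from the Springer correspondence only identifies a sum of induced representations and is insufficient because non-conjugate subgroups of $S_i$ can become hard to separate; however, because the list of orbits is finite and the canonical quotient has bounded size, this reduces to a finite case check.

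The main obstacle in both settings is recovering multiplicities. \cref{main thm} records which subgroups occur as stabilizers but not how many times each occurs, so I would need to supplement it with a parallel enumeration extracted from the inductive steps of Van Leeuwen's algorithm. In exceptional types, producing a uniform argument seems out of reach without a more conceptual input; a natural candidate would be to construct an $\bar A(e)$-equivariant bijection between left cells in $c_e$ and $K_e$-orbits on $\Irr(\spr)$ via the Kazhdan--Lusztig correspondence between cells and components of Steinberg fibers, but verifying equivariance of such a map appears strictly harder than the orbit-by-orbit verification and is where I would expect the bulk of the work to lie.
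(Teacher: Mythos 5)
The statement you are trying to prove is a conjecture in the paper: the author proves it only for exceptional $\fg$ and, for classical $\fg$, establishes just the weaker statement that the two sides have the same \emph{sets} of stabilizers (\cref{evidence}); the full classical case is left open as \cref{multiplicity}. Your proposal does not close this gap, and it contains one outright error. You claim that because $\bar A(e)$ is abelian in classical types, a $\bar A(e)$-set is determined by its permutation character since the characters $\mathrm{Ind}_H^{\bar A(e)}\mathbf{1}$ for distinct subgroups $H$ are linearly independent. This is false already for $(\cyclic{2})^{\oplus 2}=\{1,a,b,c\}$: one has $\mathrm{Ind}_{\langle a\rangle}\mathbf{1}+\mathrm{Ind}_{\langle b\rangle}\mathbf{1}+\mathrm{Ind}_{\langle c\rangle}\mathbf{1}=\mathrm{Ind}_{1}\mathbf{1}+2\,\mathrm{Ind}_{G}\mathbf{1}$, so the non-isomorphic $G$-sets $G/\langle a\rangle\sqcup G/\langle b\rangle\sqcup G/\langle c\rangle$ and $G\sqcup (G/G)^{\sqcup 2}$ have equal characters. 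A finite $G$-set is determined by its table of marks, not by its permutation character, so your character-matching strategy cannot by itself recover the orbit multiplicities in classical types. You correctly flag that \cref{main thm} gives only the list of stabilizers and not multiplicities; but your proposed remedy (extracting multiplicities from Van Leeuwen's induction and matching them against Lusztig's tables) is precisely the unsolved content of \cref{multiplicity}, so at this point you have restated the problem rather than solved it.

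For exceptional $\fg$ your finite case check is workable in outline, but you are missing the two inputs that make the paper's argument go through. First, the paper does not compare the two sides orbit by orbit; it proves (\cref{main exceptional}) that the $A_e$-set $\Irr(\spr)$ is determined by the character values $\chi(x)$ \emph{because} the candidate stabilizers are confined to the short list $S'(e)$ of \cref{stab exceptional}, and for that restricted list the square matrices of permutation characters are invertible — this is exactly the point where the naive ``character determines the set'' claim, false in general, is salvaged. Second, the reason the characters of $K_0(\Irr(\spr)/K_e)$ and $K_0(Y)$ agree at all is the isomorphism of $\bar A(e)$-modules $K_0(Y)\cong H^{top}(\spr)^{K_e}$ from \cite[Theorem 7.4]{Losev_2014}; without citing some such result you have no a priori link between the Springer side and the cell side, and the ``combinatorial identity'' you propose to verify in classical types would have to be proved from scratch.
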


\subsection{Corollaries}
A corollary of \cref{main conj} is that when the Lusztig quotient $\bar{A}(e)$ coincides with $A_e$, the $A_e$-set $\Irr(\spr)$ is identified with $Y$. On the other hand, the $\bar{A}(e)$-set $Y$ can be explicitly computed using an algorithm in \cite[Section 6.9]{Losev_2014}. In loc.cit., this set $Y$ plays an important role in the classification of finite-dimensional irreducible modules with integral regular central character of the finite W-algebra attached to $e$. Hence, we expect \cref{main conj} to be the combinatorial shadow of a deeper relation between simple modules of finite W-algebras and the irreducible components of Springer fibers. 

Another corollary of \cref{main conj} is a realization of the finite asymptotic Hecke category $\cJ_e$ from \cite{Lusztig1997}. In loc.cit., the category $\cJ_e$ is defined with respect to the two-sided cell $c_e$ so that $\cJ_e$ categorifies a block $J_e$ of the finite asymptotic Hecke algebra. There is a monoidal equivalence between $\cJ_e$ and the category of $\bar{A}(e)$-equivariant sheaves of vector spaces on $Y\times Y$, $\Coh^{\bar{A}(e)}(Y\times Y)$ (first conjectured in \cite{leading}, and proved in \cite{Tensor3}). The tensor product of $\Coh^{\bar{A}(e)}(Y\times Y)$ is given by convolution. Therefore, \cref{main conj} would mean that we can replace $Y$ by $\Irr(\spr)/K_e$ to obtain a realization of $\cJ_e$ in terms of the Springer fiber $\spr$. Passing to K-groups, we see that $K_0^{\bar{A}(e)}((\Irr(\spr)/K_e)\times (\Irr(\spr)/K_e))$ is isomorphic to $J_e$. On the other hand, the corresponding block $J_e^{aff}$ of the affine asymptotic Hecke algebra admits a geometric realization as $K_0^{Z_e}(\fix\times \fix)$ in \cite{bezrukavnikov2023geometric} where $Z_e$ is the reductive part of $Z_G(e)$ and $\fix$ is a certain smooth subvariety of $\spr$. One of our further goals is to establish connections between these realizations of $J_e$ and $J_e^{aff}$. %\footnote{The idea of relating the finite Weyl group and the connected components of $\fix\times \fix$ was originally suggested by Roman Bezrukavnikov.}

In this paper, our second main result is evidence of \cref{main conj} for classical $\fg$. This result is also a stronger form of a conjecture by Lusztig and Sommers from \cite{lusztig2024constructible}.
\begin{thm}[\cref{evidence}]
    Assume that $\fg$ is of classical type and $e\in \fg$ is a special nilpotent element. Consider the quotient map $p: A_e\rightarrow \bar{A}(e)$. The set $\{p(A), A\in \Stab(e)\}$ consists precisely of the subgroups $H_\sigma\subset \bar{A}(e)$ attached to the left cells in $c_e$.
\end{thm}
\subsection{Contents}
When $\fg$ is of exceptional type, we give an explicit description of the $A_e$-action on $\Irr(\spr)$ and verify that \cref{main conj} holds. These results are presented in Section 5.

The rest of the paper focuses on the case where $\fg$ is of classical type. Section 2 revisits the construction of Van Leeuwen \cite{VanL}, who described $\Irr(\spr)$ in terms of signed domino tableaux. Using his notion of clusters (\cite[Section 3.3]{VanL}), we formulate a necessary condition for a subgroup $A \subset A_e$ to arise as a stabilizer.

In Section 3, we prove that this condition is also sufficient. More precisely, for every subgroup $A \in \Stab(e)$, we construct an irreducible component of $\spr$ whose stabilizer is exactly $A$. The proof relies on geometric properties of a smooth torus-fixed subvariety $\fix \subset \spr$, which appears in several works \cite{DLP, lusztig2021discretization, bezrukavnikov2023geometric}, and has been studied in detail in \cite{hoang2024geometryfixedpointsloci}. We also explain a key step, inspired by \cite[Section 3]{DLP}, that relates the connected components of $\fix$ to the irreducible components of $\spr$.

Section 4 establishes \cref{evidence}, which identifies the image of $\Stab(e)$ under the canonical map $A_e \to \bar{A}(e)$ with the collection of Lusztig’s subgroups $H_\sigma$ arising from constructible representations. We also briefly outline an approach to \cref{main conj} for classical $\fg$.
\begin{comment}
    \subsection{Acknowledgements.} I would like to thank George Lusztig for introducing me to the conjecture in \cite{lusztig2024constructible}. Another source of motivation for this paper was a series of stimulating discussions with Roman Bezrukavnikov and Vasily Krylov.

I am also grateful to Eric Sommers for many fruitful discussions and insightful comments on various aspects of the text. In particular, his code and explanations related to \cite{DLP} were crucial to my understanding of the exceptional case. I also thank Ivan Losev for countless valuable pieces of advice. This work was partially supported by the NSF under grant DMS-2001139.
\end{comment}

\section{Signed domino tableaux and a combinatorial description of $\Stab(e)$}

In this paper, for classical groups, we explain the details for $G = \mathrm{Sp}_{2n}$. The cases where $G$ is of type B or D are similar; we state only general definitions and main results for these cases.

We recall the parameterization of nilpotent orbits in $\fsp_{2n}$. From \cite[Theorem 5.1.3]{collingwood1993nilpotent}, the nilpotent orbits in $\fsp_{2n}$ are in bijection with the following set of partitions:
$$
\cP_C(2n) := \{\text{partitions } \lambda \text{ of } 2n \text{ in which odd parts have even multiplicities}\}.
$$
We say that a partition is of type C if it belongs to $\cP_C(2m)$ for some $m$.

Next, we describe the group $A_e$ in terms of $\lambda$, the partition associated to $e$. Consider $\lambda \in \cP_C(2n)$, and let $\ell$ be the number of distinct even parts in $\lambda$. Assume that they are $2x_1 > \dots > 2x_\ell$ with multiplicities $r_1, \dots, r_\ell$ in $\lambda$. From \cite[Section 6.1]{collingwood1993nilpotent}, the group $A_e$ is isomorphic to $(\cyclic{2})^{\oplus \ell}$. There is a natural set of generators $t_1, \dots, t_\ell$ of $A_e$ corresponding to the parts $2x_1, \dots, 2x_\ell$ of $\lambda$.

The main result of this section is a bijection between the set $\Stab(e)$ and the set of \textit{even noncrossing partitions} of $\{0, 2x_1, \dots, 2x_\ell\}$. In Section 2.1, we introduce the notion of even noncrossing partitions and state the results in \cref{main thm} and \cref{main BD}.

\subsection{Noncrossing partitions and the main results}

Let $B_\lambda$ be the set $\{0, 2x_1, \dots, 2x_\ell\}$. We recall the definition of noncrossing partitions of $B_\lambda$ (see, e.g., \cite{Simion2000}).

\begin{defin}[NCP]\label{NCP}
    Consider a partition of the set $B_\lambda = \{0, 2x_1, \dots, 2x_\ell\}$ into $k+1$ nonempty subsets $L_0, L_1, \dots, L_k$. For $0 \leqslant i < j \leqslant k$, consider $a \leqslant b$ in $L_i$ and $c \leqslant d$ in $L_j$. 

    For the two intervals $[a, b]$ and $[c, d]$, we impose the following conditions:
    \begin{enumerate}
        \item The two intervals do not intersect; or
        \item One interval is contained in the other.
    \end{enumerate}

    If this condition is satisfied for all pairs $0 \leqslant i < j \leqslant k$ and for all $a, b \in L_i$ and $c, d \in L_j$, we call the partition $L_0, L_1, \dots, L_k$ a \textit{noncrossing partition} (abbreviated NCP) of $B_\lambda$.
\end{defin}

Next, we explain an illustrative way to think about the NCPs. Let $L_0,L_1, \dots, L_k$ be a partition of $\{0,2x_1,\dots,2x_\ell\}$. Pick $k+1$ different colors with respect to $k+1$ sets $L_i$. Consider the points $\{0,2x_1,\dots,2x_\ell\}$ placed on the $x$-axis in $\mathbb{R}^{2}$. Connect two points $a< b$ in $L_i$ if $[a,b]\cap L_i= \{a,b\}$ by an arc above the $x$-axis. For each $L_i$, draw an arc above the $x$-axis connecting two points $a < b$ in $L_i$ if $[a, b] \cap L_i = \{a, b\}$. Color the points in $L_i$ and the arcs between them using the color assigned to $L_i$. Then $L_0, L_1, \dots, L_k$ forms an NCP if and only if no two arcs of different colors intersect in the diagram. We refer to this as an \textit{NCP diagram}.

\begin{rem}\label{end point}\leavevmode
\begin{enumerate}
    \item The NCPs of $\{0,2x_1,\dots,2x_\ell\}$ are in bijection with the NCPs of $\{0,1,\dots,\ell\}$. 
    \item An NCP $L_0,L_1,\dots,L_k$ of $B_\lambda= \{0,2x_1,\dots,2x_\ell\}$ is uniquely determined by the numbers $min(L_i)$, $max(L_i)$, $0\leqslant i\leqslant \ell$. In particular, assume that we are given $k+1$ intervals 
    $I_i = [\min(L_i), \max(L_i)]$ 
    that satisfy the conditions in \cref{NCP}.
    
    For each $i$, let $D_i \subset [0, k]$ be the set of indices $j$ such that $I_j \subsetneq I_i$.
    Then
    $$
    L_i = I_i \setminus \left( \bigcup_{j \in D_i} I_j \right).
    $$
    
    \item Similarly, assume that we know $0\in L_0$, then $I_0$ is not contained in $I_i$ for $1\leqslant i\leqslant k$. In this case, the NCP is determined by the $k$ intervals $I_1, \dots, I_k$. We obtain $L_1, \dots, L_k$ as above and we get $L_0$ as the complement of $\bigcup_{i=1}^{k} L_i$ in $B_\lambda$.
\end{enumerate}
\end{rem}

\begin{exa}
    In what follows, we present several NCP diagrams for the cases $\ell = 4$ and $\ell = 5$.
    $$
    \begin{tikzpicture}
        \draw[red] (-10,0) .. controls (-8,1) .. (-6,0); \filldraw[red] (-10,0) circle (2pt); \filldraw[red] (-6,0) circle (2pt);
\draw[blue] (-9,0) .. controls (-8,0.5) .. (-7,0); \filldraw[blue] (-9,0) circle (2pt); \filldraw[blue] (-7,0) circle (2pt);
\filldraw[yellow] (-8,0) circle (2pt);

\draw[blue] (-1,0) .. controls (-1.5,0.25) .. (-2,0); \filldraw[blue] (-1,0) circle (2pt); \filldraw[blue] (-2,0) circle (2pt); 
\draw[blue] (-2,0) .. controls (-2.5,0.25) .. (-3,0); \filldraw[blue] (-3,0) circle (2pt);
\draw[blue] (-3,0) .. controls (-3.5,0.25) .. (-4,0); \filldraw[blue] (-4,0) circle (2pt);
\filldraw[red] (0,0) circle (2pt);
    \end{tikzpicture}
    $$
    
    $$
\begin{tikzpicture}

\draw[red] (-5.5,0) .. controls (-5,0.25) .. (-4.5,0); \filldraw[red] (-4.5,0) circle (2pt); \filldraw[red] (-5.5,0) circle (2pt);
\draw[green] (-2.5,0) .. controls (-3,0.25) .. (-3.5,0); \filldraw[green] (-3.5,0) circle (2pt); \filldraw[green] (-2.5,0) circle (2pt);
\draw[red] (-1.5,0) .. controls (-1,0.25) .. (-0.5,0); \filldraw[red] (-1.5,0) circle (2pt); \filldraw[red] (-0.5,0) circle (2pt);
\draw[red] (-4.5,0) .. controls (-3,1) .. (-1.5,0);

\draw[red] (0,0) .. controls (2.5,1) .. (5,0); \filldraw[red] (0,0) circle (2pt); \filldraw[red] (5,0) circle (2pt);
\draw[blue] (1,0) .. controls (1.5,0.25) .. (2,0); \filldraw[blue] (1,0) circle (2pt); \filldraw[blue] (2,0) circle (2pt); 
\draw[blue] (2,0) .. controls (2.5,0.25) .. (3,0); \filldraw[blue] (3,0) circle (2pt);
\draw[blue] (3,0) .. controls (3.5,0.25) .. (4,0); \filldraw[blue] (4,0) circle (2pt);

\end{tikzpicture}
$$
\end{exa}
Next, we define the even noncrossing partitions (even NCPs) with respect to a type C partition $\lambda$.
Recall that we write $r_i$ for the multiplicity of $2x_i$ in $\lambda$, for $1 \leqslant i \leqslant \ell$.

\begin{defin}\label{even NCP}
A partition $\cL = \{L_0, \dots, L_k\}$ of $B_\lambda$ is called an \textit{even noncrossing partition} (even NCP) if the following conditions are satisfied:
\begin{enumerate}
    \item $\cL$ is an NCP of $B_\lambda$, and $0 \in L_0$.
    \item For each $1 \leqslant i \leqslant k$, the number
    $$
    r(L_i) := \sum_{2x_j \in L_i} r_j
    $$
    is even.
\end{enumerate}
\end{defin}

Recall that $A_e \cong (\cyclic{2})^{\oplus \ell}$ has a set of generators $t_1, \dots, t_\ell$.
Given an NCP $\cL = \{L_0, \dots, L_k\}$ of $B_\lambda$, we define the corresponding subgroup $A_{\cL} \subset A_e$ to be generated by:
\begin{itemize}
    \item $t_i t_j$ for $2x_i, 2x_j \in L_l$, where $1 \leqslant l \leqslant k$;
    \item $t_i$ for $2x_i \in L_0$.
\end{itemize}

Our main theorem for type C is as follows.

\begin{thm} \label{main thm}
    Let $S(e)$ be the set consisting of the groups $A_\cL$, where $\cL$ ranges over the even NCPs of $B_\lambda$. Then $S(e)$ coincides with $\Stab(e)$.

\end{thm}
The proof of this theorem is given in Sections 2.3 and 3.
%\cref{main combi} has shown that $\Stab(e)\subset S(e)$, we will prove $S(e)\subset \Stab(e)$ in the next section. 

\begin{exa} \label{example stab}
    Consider $\lambda = (100^3, 38^3, 16^2)$.
    Then $A_e$ is generated by $t_1, t_2, t_3$, and the set $B_\lambda$ is $\{0, 100, 38, 16\}$.
    There are five even NCPs in this case. We list the elements of the set $S(e)$ together with their corresponding even NCPs.
    \begin{enumerate}
        \item $A = \langle t_1 t_2, t_3 \rangle$ corresponds to $\{100, 38\}, \{0, 16\}$.
        \item $A = \langle t_1 t_2 \rangle$ corresponds to $\{100, 38\}, \{16\}, \{0\}$.
        \item $A = \langle t_1, t_2 \rangle$ corresponds to $\{100, 38, 0\}, \{16\}$.
        \item $A = \langle t_1, t_2, t_3 \rangle$ corresponds to $\{100, 38, 16, 0\}$.
        \item $A = \langle t_1 t_2, t_2 t_3 \rangle$ corresponds to $\{100, 38, 16\}, \{0\}$.
    \end{enumerate}
\end{exa}

We state a similar result for $\fg = \fso_{2n}$ or $\fso_{2n+1}$. 
Consider $e \in \fg$ with the associated partition $\lambda$.
Let $2x_1 + 1, \dots, 2x_\ell + 1$ be the distinct odd parts of $\lambda$, and write $r_i$ for the multiplicities of $2x_i + 1$ in $\lambda$.
Set $B_\lambda = \{0, 2x_1 + 1, \dots, 2x_\ell + 1\}$, and define the even NCPs $\cL$ of $B_\lambda$ as in \cref{even NCP}.

The group $A_e$ is trivial if $\ell = 0$. 
When $G = O_{2n}$ (or $O_{2n+1}$) and $\ell \geqslant 1$, the group $A_e$ is isomorphic to $(\cyclic{2})^{\ell}$ with a set of generators $t_1, \dots, t_\ell$ \cite[Section 6]{collingwood1993nilpotent}. 
For $G = SO_{2n}$ (or $SO_{2n+1}$), we instead have $A_e \cong (\cyclic{2})^{\ell - 1}$, with generators $t_i t_j$ for $1 \leqslant i < j \leqslant \ell$. 
In either case, the process of obtaining a subgroup $A_{\cL} \subset A_e$ from an even NCP of $B_\lambda$ makes sense.

In the case of $SO$, the generators of $A_\cL$ that come from $L_0$ are now taken to be $t_i t_j$ for $i, j \in L_0$.

\begin{thm}\label{main BD}
    Let $S(e)$ be the set consisting of the groups $A_\cL$, where $\cL$ ranges over the even NCPs of $B_\lambda$.
    Then $S(e)$ coincides with $\Stab(e)$.
\end{thm}

In the remainder of this section, we explain the containment $\Stab(e) \subset S(e)$ in the case where $e \in \fsp_{2n}$. The key ingredient is a parameterization of the set $\Irr(\spr)$ using signed domino tableaux. The cases of $\fso_{2n}$ and $\fso_{2n+1}$ can be treated in the same way.

\subsection{Generalities about signed domino tableaux}
\subsubsection{Standard domino tableaux and admissible domino tableaux}
We recall the definitions of standard domino tableaux and admissible domino tableaux. The following exposition follows \cite{VanL} and \cite{Pietraho2004}. 

Consider a partition $\lambda$ of a positive integer $N$. We have the corresponding Young diagram $Y_\lambda$ consisting of $N$ boxes. We call $\lambda$ the \textit{shape} of this Young diagram. We will use the notation $(k,l)$ for the box in the $k$-th row and the $l$-th column (from the top and the left). A horizontal domino then consists of two boxes $(i,j)$ and $(i, j+1)$; a vertical domino consists of two boxes $(i,j)$ and $(i+1,j)$ for some $i,j$. 
\begin{defin}\label{standard domino tableaux}
    A \text{standard domino tableau} is a partition of $Y_\lambda$ into dominoes of sizes $1\times 2$ or $2\times 1$ and possibly a single box that satisfies the following.
    \begin{enumerate}
        \item If $N$ is odd, the box $(1,1)$ forms a singleton in the partition. In particular, there are always $n= \floor*{\frac{N}{2}}$ dominoes. 
        \item These $n$ dominoes are labeled with $n$ distinct numbers $1,\dots,n$. Write $f(i,j)$ for the label of the domino containing the box $(i,j)$. If $N$ is odd, let $f(1,1)$ be $0$.
        \item $f(i,j)\leqslant f(i+1,j)$ and $f(i,j)\leqslant f(i,j+1)$ for $i,j\geq 1$. 
    \end{enumerate}
    The set of standard domino tableaux of shape $\lambda$ is denoted by $SDT(\lambda)$.
\end{defin}
\begin{exa}\label{standardex}
    In the following, we list all the standard domino tableaux of shape $(3,3)$.
    $$
\raisebox{3ex}{$T_1=$ \;}
\begin{tableau}
:^1 ^2 ^3\\
:;\\
\end{tableau}
\hspace{1in}
\raisebox{3ex}{$T_2=$ \;}
\begin{tableau}
:^1 >2\\
:; >3\\
\end{tableau}
\hspace{1in}
\raisebox{3ex}{$T_3=$ \;}
\begin{tableau}
: >1 ^3\\
: >2\\
\end{tableau}
$$
\end{exa}

We proceed to the definitions of admissible domino tableaux of type $X$, where $X = B, C$, or $D$. Let $\lambda$ be a partition of $2n$ or $2n+1$. 
Recall from \cite{collingwood1993nilpotent} that nilpotent orbits in classical types are parameterized by their partitions. 
In type $D$, a partition consisting only of even parts corresponds to two orbits. 
The Springer fibers of these two orbits are isomorphic, so we will not distinguish between them.

\begin{defin} \label{admissible domino tableaux}
    Consider a standard domino tableau $T$. 
    For $1 \leqslant i \leqslant n$, let $T^i$ denote the domino tableau consisting of the dominoes in $T$ with labels less than $i+1$. 
    We say that $T$ is \textit{admissible of type $X$} if the shape of $T^i$ is a partition corresponding to a nilpotent orbit of type $X$, for each $i$.
\end{defin}

From \cite[Section 3.2]{VanL}, an admissible tableau $T$ consists of three types of dominoes, defined as follows.

\begin{defin}\label{type of domino}
    Suppose $T$ is admissible of type $C$ (respectively, $B$ or $D$).
    \begin{enumerate}
        \item Type $(N)$: a horizontal domino, where the left box lies in an odd (respectively, even) column.
        \item Type $(I+)$: a vertical domino that lies in an even (respectively, odd) column.
        \item Type $(I-)$: a vertical domino that lies in an odd (respectively, even) column.
    \end{enumerate}
\end{defin}

In the remainder of the paper, we work with $\fsp_{2n}$.

\begin{exa}\label{admissible}
    Consider the domino tableaux in \cref{standardex}. 
    The tableau $T_2$ is not admissible because $T_2^2$ has the shape $(3,1) \notin \cP_C(4)$. Alternatively, one can observe that the dominoes labeled $2$ and $3$ in $T_2$ are not of the three types defined above.

    On the other hand, $T_1$ and $T_3$ are admissible. The tableau $T_1$ consists of one $(I+)$ domino and two $(I-)$ dominoes, while $T_3$ consists of two $(N)$ dominoes and one $(I-)$ domino.
\end{exa}

Now consider $\lambda \in \cP_C(2n)$, and let $e$ be a nilpotent element of $\fsp_{2n}$ with associated partition $\lambda$. Let $\sum DT(\lambda)$ denote the set of admissible domino tableaux of shape $\lambda$, together with a sign $+$ or $-$ assigned to each domino of type $(I+)$.

The set $\Irr(\spr)$ is parameterized by a quotient of $\sum DT(\lambda)$ by an equivalence relation. 
To explain this equivalence relation, we recall the definition of clusters in the next section.

\subsubsection{Clusters}
Consider an admissible domino tableau $T$. Following \cite[Section 3.3]{VanL}, we inductively define the set of clusters $\cC_T$ and a map $b_T: B_\lambda \rightarrow \cC_T$.

The clusters form a partition of $T$. When $T$ consists of a single domino, there is a single cluster, and $b_T$ sends $0$ to this cluster.

Now consider the case where $T$ has $n$ dominoes. Recall that we write $T^{n-1}$ for the admissible tableau consisting of the dominoes labeled $1, 2, \dots, n-1$ in $T$. Let $\lambda'$ be the shape of $T^{n-1}$. Assuming that the set $\cC_{T^{n-1}}$ and the map $b_{T^{n-1}}$ are already defined, we define $\cC_T$ and $b_T$ as follows.

\begin{defin}\label{clusters def}
    Let $d$ be the $n$-th domino. There are three cases, depending on the type of $d$:
    \begin{enumerate}
        \item[(N)] $d$ lies in columns $2i-1$ and $2i$ for some $i \geqslant 1$. Then $2i \in B_\lambda$. We merge the cluster $b_{T^{n-1}}(2i-2)$, the domino $d$, and the cluster $b_{T^{n-1}}(2i)$ into a new cluster of $T$. 
        Note that the cluster $b_{T^{n-1}}(2i)$ only exists if $2i$ is a part of $\lambda'$. 
        All other clusters of $T^{n-1}$ remain unchanged.
        %and their preimages under $b_T$ are inherited from $b_{T^{n-1}}^{-1}$. The remaining numbers in $B_\lambda$ are sent to the new cluster.
        
        \item[(I+)] $d$ lies in column $2i$ for some $i \geqslant 1$. Then $2i \in B_\lambda$. 
        We merge $d$ and the cluster $b_{T^{n-1}}(2i)$ into a new cluster of $T$. 
        In particular, if $2i$ is not a part of $\lambda'$, the new cluster consists only of the domino $d$. All other clusters of $T^{n-1}$ remain unchanged.
        %The remaining steps are the same as in the $(N)$ case.

        \item[(I-)] $d$ lies in column $2i+1$ for some $i \geqslant 1$. 
        We merge $d$ and the cluster $b_{T^{n-1}}(2i)$. All other clusters of $T^{n-1}$ remain unchanged.
        %The remaining steps are the same as in the $(N)$ case.
    \end{enumerate}
    For the unchanged clusters of $T^{n-1}$, their preimages under $b_T$ are inherited from $b_{T^{n-1}}^{-1}$. Finally, we send the remaining unassigned elements of $B_\lambda$ to the new cluster.
\end{defin}

The clusters in the image of $b_T(B_\lambda)$ are called \textit{open}, and the remaining clusters are called \textit{closed}. 
With this construction, any cluster $\cC \neq b_T(0)$ always contains at least one domino of type $(I+)$ \cite[Section 3.3]{VanL}.

Given an element of $\sum DT(\lambda)$, we define the sign of a cluster $\cC \neq b_T(0)$ as the product of the signs of the dominoes of type $(I+)$ in $\cC$. 
The sign of the cluster $b_T(0)$ is, by definition, taken to be $+$.

For $T' \in \sum DT(\lambda)$, let its \textit{underlying tableau} be the domino tableau obtained from $T'$ by forgetting the signs $\pm$. 
Consider $T_1, T_2 \in \sum DT(\lambda)$. 
We say $T_1 \sim_{cl} T_2$ (respectively, $T_1 \sim_{op,cl} T_2$) if the following conditions are satisfied:
\begin{enumerate}
    \item $T_1$ and $T_2$ have the same underlying tableau;
    \item The signs of all closed (respectively, open and closed) clusters in $T_1$ and $T_2$ are equal.
\end{enumerate}

Let $\sum DT_{cl}(\lambda)$ and $\sum DT_{op,cl}(\lambda)$ denote the quotients $\sum DT(\lambda)/\sim_{cl}$ and $\sum DT(\lambda)/\sim_{op,cl}$, respectively. 
The irreducible components of $\spr$ are parametrized by the elements of $\sum DT_{op,cl}(\lambda)$ \cite[Proposition 3.4.1]{VanL}. 
Two irreducible components of $\spr$ lie in the same $A_e$-orbit if and only if the corresponding signed domino tableaux have the same signs on the closed clusters.

In other words, the forgetful map 
$$
\sum DT_{op,cl}(\lambda) \rightarrow \sum DT_{cl}(\lambda)
$$ 
sends an irreducible component of $\spr$ to its $A_e$-orbit.

\begin{exa} \label{22 Springer}
Consider $e\in \fsp_4$ with the associated partition $(2,2)$. The singed domino tableaux that parameterize the irreducible components of $\spr$ are as follows. 
    $$
\raisebox{3ex}{$T_1=$ \;}
\begin{tableau}
:^1 ^{2\atop +}\\
:;\\
\end{tableau}
\hspace{1in}
\raisebox{3ex}{$T_2=$ \;}
\begin{tableau}
:^1 ^{2\atop -}\\
:;\\
\end{tableau}
\hspace{1in}
\raisebox{3ex}{$T_3=$ \;}
\begin{tableau}
:>1\\
:>2\\
\end{tableau}
$$
\newline
For $T_1$ and $T_2$, there are two clusters; each consists of a single domino. The map $b_{T_1}= b_{T_2}$ sends $0$ and $2$ to the clusters $\{1\}$ and $\{2\}$, respectively. In $T_3$, there is only one cluster and $b_{T_3}$ sends both $0$ and $2$ to this cluster. The action of $A_e$ fixes $T_3$, while permutes $T_1$ and $T_2$. In this example, all the clusters mentioned are open.
\end{exa}
Next, we present examples with the appearance of closed clusters.
\begin{exa} \label{44 Springer} %define b_T first maybe
Let $\lambda= (4,4,2,2)$. Consider
$$\raisebox{3ex}{$T=$ \;}
    \begin{tableau}
:^1^2^3^4\\
:;\\
:^5^6\\
\end{tableau}$$ 
The clusters are $\{1,5\}_0$, $\{2,3\}$, $\{4\}_4$, and $\{6\}_2$. Here, the subscript of a cluster is the column attached to it by $b_T$. The open clusters are $\{1,5\}_0$, $\{4\}_4$, and $\{6\}_2$. Hence, we have correspondingly $8$ elements of $\sum DT_{op, cl}(\lambda)$ with the underlying domino tableau $T$. The elements of $\sum DT_{op, cl}(\lambda)$ with underlying tableau $T$ are 
$$ \quad
\begin{tableau}
:^1^{2\atop +}^3^{4\atop \pm}\\
:;\\
:^5^{6\atop \pm}\\
\end{tableau} \quad \text{and} \quad
\begin{tableau}
:^1^{2\atop -}^3^{4\atop \pm}\\
:;\\
:^5^{6\atop \pm}\\
\end{tableau}
$$
\newline
These tableaux parameterize $8$ irreducible components that belong to two diffent $A_e$-orbits (corresponding to two different signs of the closed cluster \{2,3\}). 

Next, we consider
$$\raisebox{3ex}{$T=$ \;}
    \begin{tableau}
:^1^3>5\\
:; ; >6\\
:^2^4\\
\end{tableau}$$ 
\newline
Here, the clusters are $\{1,2\}_{0}$ and $\{3,4,5,6\}_{2,4}$. 
The cluster $\{3,4,5,6\}_{2,4}$ contains two $(I+)$ dominoes labeled $3$ and $4$. 
To represent the classes of $\sum DT_{op,cl}(\lambda)$ with underlying tableau $T$, we may fix the sign of the domino labeled $4$. 
Hence, we obtain the following two classes.

$$ \quad
\begin{tableau}
:^1^{3\atop \pm}>5\\
:; ; >6\\
:^2^{4\atop +}\\
\end{tableau}
$$
\newline
Consequently, the $A_e$-orbit of $\Irr(\spr)$ that corresponds to $T$ has two irreducible components. 
\end{exa}

\subsection{The action of $A_e$}
In this section, we explain how $A_e$ acts on the set $\sum DT_{op, cl}(\lambda)$. The main result of this section is \cref{main combi} where we prove the containment $\Stab(e)\subset S(e)$ (see \cref{main thm}). 

Consider $\lambda\in \cP_C(2n)$. Consider an admissible domino tableau $T$ of shape $\lambda$, write $\cO\cC_T$ for the set of open clusters in $T$. From \cref{clusters def}, we have the surjective map $b_T: B_\lambda= \{0,2x_1,\dots,2x_\ell\} \rightarrow \cO\cC_T$.

Let $\sum DT_{op,cl}(T)$ be the subset of $\sum DT_{op,cl}(\lambda)$ with the underlying domino tableau $T$. In \cite[Section 3.3]{VanL}, an action of $A_e$ on $\sum DT_{op,cl}(T)$ is defined as follows. The generator $t_i$ of $A_e$ acts on $\sum DT_{op,cl}(T)$ by changing the sign of the open cluster $b_T(2x_i)$. In particular, if $2x_i$ and $2x_j$ are assigned to the same open cluster by $b_T$, the actions of $t_i$ and $t_j$ on $\sum DT_{op,cl}(T)$ are the same. The action of $t_i$ for $2x_i\in b_T(0)$ is always trivial. More precisely, we have the following corollary.
\begin{cor} \label{easy cor}
    The stabilizer of an element $T^\pm \in \sum DT_{op,cl}(T)$ is the subgroup of $A_e$ that is generated by elements of the form $t_it_j$ for $b_T(i)= b_T(j)$ and $t_i$ for $2x_i\in b_T(0)$.
\end{cor}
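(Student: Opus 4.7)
The plan is to translate the action of $A_e$ on $\sum DT_{op,cl}(T)$ just described into explicit parity conditions on $A_e\cong (\cyclic{2})^{\oplus \ell}$, and verify that the two families of elements listed in the statement generate exactly the subgroup cut out by those conditions.

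Write an arbitrary element of $A_e$ as $w= \prod_{i=1}^{\ell} t_i^{a_i}$ with $a_i\in \{0,1\}$. Since $t_i$ flips the sign of the open cluster $b_T(2x_i)$ when $2x_i\notin b_T(0)$ and acts trivially otherwise, applying $w$ to $T^\pm$ multiplies the sign attached to each open cluster $\cC\neq b_T(0)$ by $(-1)^{s_\cC(w)}$, where
$$s_\cC(w) = \sum_{i:\, b_T(2x_i)=\cC} a_i.$$
Therefore $w$ stabilizes $T^\pm$ if and only if $s_\cC(w)$ is even for every open cluster $\cC\neq b_T(0)$.

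Let $H\subset A_e$ denote the subgroup cut out by these parity constraints. Each listed generator obviously lies in $H$: a pair $t_it_j$ with $b_T(2x_i)=b_T(2x_j)$ contributes either $0$ or $2$ to each $s_\cC$, while a single $t_i$ with $2x_i\in b_T(0)$ does not appear in any $s_\cC$ at all. Conversely, given $w\in H$, partition the support $\{i: a_i=1\}$ according to the open cluster $b_T(2x_i)$; inside each cluster $\cC\neq b_T(0)$ the resulting set has even cardinality by definition of $H$, hence can be grouped into pairs yielding generators of the form $t_it_j$, while the indices with $2x_i\in b_T(0)$ contribute the single generators $t_i$. Concatenating these pieces writes $w$ as a word in the listed generators.

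There is no serious obstacle here: the corollary is essentially a bookkeeping consequence of the explicit description of the action on $\sum DT_{op,cl}(T)$. The one subtlety to flag is that the cluster $b_T(0)$ must be exempt from the parity condition, precisely because generators $t_i$ with $2x_i\in b_T(0)$ already act trivially and so join the stabilizer individually rather than only in pairs.
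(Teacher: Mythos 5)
Your proposal is correct and is essentially the argument the paper intends: the corollary is stated as an immediate consequence of the description of the action (each $t_i$ flips the sign of the open cluster $b_T(2x_i)$, trivially for $2x_i\in b_T(0)$), and your parity bookkeeping is just the explicit verification of that. The pairing argument for the converse inclusion and the exemption of $b_T(0)$ are exactly the right points to make.
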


From \cite[Proposition 3.4.1]{VanL}, there is an $A_e$-equivariant bijection between $\Irr(\spr)$ and $\sum DT_{op,cl}(\lambda)$. Therefore, the stabilizer set $\Stab(e)$ is determined by all possible partitions of $B_\lambda$ given by the preimages of the map 
$$
b_T: \{0, 2x_1, \dots, 2x_\ell\} \rightarrow \cO\cC_T.
$$ 
The following result relates these partitions to the even noncrossing partitions defined in \cref{even NCP}.

\begin{pro}\label{main combi}Consider an admissible domino tableau $T$ of shape $\lambda$. The partition of $\{0,2x_1,\dots,2x_\ell\}$ into the preimages of the open clusters is an even noncrossing partition (even NCP) of $\{0,2x_1,\dots,2x_\ell\}$. As a result, $\Stab(e)$ is a subset of $S(e)$.
\end{pro}
\begin{proof}
    
We prove the proposition by induction on the number of dominoes in $T$. The proof closely follows the inductive process used to define the clusters in \cref{clusters def}. 

The base case is when $T$ consists of a single domino; in this case, the conditions are clearly satisfied. 
Consider $T$ has $n>1$ dominoes. Let $\lambda'$ be the shape of $T^{n-1}$. Let $\cL'$ and $\cL$ be the partition defined by $b_{T^{n-1}}$ and $b_T$, respectively. Assume that $\cL'$ is an even NCP, we prove $\cL$ has the same property. 

\textbf{Step 1: Verify the noncrossing condition.} We explain how to obtain the NCP diagram of $\cL$ from that of $\cL'$. We consider three cases, depending on the type of the new domino $d$ labeled $n$. 

    \begin{enumerate}
        \item (I+): $d$ lies in the $2i$-th column. If $2i\in \lambda'$, the NCP diagram remains unchanged. If $2i\notin \lambda'$, we add the point $2i$ to the NCP diagram with a new color, representing a new open cluster consisting of a single domino.
        \item (I-): $d$ lies in the $2i+1$-th column. If $2i\in \lambda$, the NCP diagram remains unchanged. If $2i\notin \lambda$, we remove the point $2i$ and concatenate the two arcs that had $2i$ as an endpoint. The following diagram illustrates this operation.
        
        $$
        \begin{tikzpicture}[scale=1.0]

% LEFT SIDE (before)
% Arc 1: red
\draw[red] (-7,0) .. controls (-6.5,0.25) .. (-6,0); 
\filldraw[red] (-7,0) circle (2pt); 
\filldraw[red] (-6,0) circle (2pt);

% Arc 2: green
\draw[green!60!black] (-4.5,0) .. controls (-4,0.25) .. (-3.5,0); 
\filldraw[green!60!black] (-4.5,0) circle (2pt); 
\filldraw[green!60!black] (-3.5,0) circle (2pt);

% Arc 3: red, with label 2i
\draw[red] (-2.5,0) .. controls (-2,0.25) .. (-1.5,0); 
\filldraw[red] (-2.5,0) circle (2pt) node[anchor=north]{\(2i\)}; 
\filldraw[red] (-1.5,0) circle (2pt); 

% Big arc over: red
\draw[red] (-6,0) .. controls (-4,1) .. (-2.5,0); 

% ARROW (center)
\draw[->, thick] (0,0.5) -- (1.5,0.5) node[midway, above]{remove \(2i\), merge arcs};

% RIGHT SIDE (after)
% New big arc: red
\draw[red] (3,0) .. controls (5,1) .. (7,0); 
\filldraw[red] (3,0) circle (2pt); 
\filldraw[red] (7,0) circle (2pt);

% Green arc (unchanged)
\draw[green!60!black] (4,0) .. controls (4.5,0.25) .. (5,0); 
\filldraw[green!60!black] (4,0) circle (2pt); 
\filldraw[green!60!black] (5,0) circle (2pt);

% Short red arc
\draw[red] (2,0) .. controls (2.5,0.25) .. (3,0); 
\filldraw[red] (2,0) circle (2pt); 
% No point at 2.5 = formerly 2i

\end{tikzpicture}
$$
        In the case where $2i$ is the maximum or minimum of the preimage of an open cluster of $T^{n-1}$, we simply remove the point $2i$ and the arc connected to it. 
        \item (N): $d$ lies in two columns $2i-1$ and $2i$ for some $i\geqslant 1$. We consider the following subcases.
        \begin{enumerate}
            \item Suppose \( 2i \notin \lambda' \) and \( 2i - 2 \notin \lambda \). In the NCP diagram of \( b_{T^{n-1}} \), we shift the point labeled \( 2i - 2 \) to the right, replacing it with the point labeled \( 2i \). Since there are no other even numbers between \( 2i - 2 \) and \( 2i \), the resulting diagram still satisfies the noncrossing condition.
            \item Suppose \( 2i \notin \lambda' \) and \( 2i - 2 \in \lambda \). We first add the point \( 2i \) to the NCP diagram of \( b_{T^{n-1}} \), and remove the arc connecting \( 2i - 2 \) to some \( 2x_l \) with \( x_l > i \). We then insert two new arcs: one from \( 2i - 2 \) to \( 2i \), and another from \( 2i \) to \( 2x_l \). (If no such \( x_l \) exists, we simply add the arc between \( 2i - 2 \) and \( 2i \).)

            \item Suppose $2i \in \lambda'$. If $2i$ and $2i - 2$ have the same color, the NCP diagram remains unchanged. If they have different colors, we first add an arc between $2i - 2$ and $2i$, then recolor all the points in $b_{T^{n-1}}^{-1}(b_{T^{n-1}}(2i))$, $b_{T^{n-1}}^{-1}(b_{T^{n-1}}(2i - 2))$, and the associated arcs with a new color. This process corresponds to merging two open clusters into one; that is, we take the union of two sets in an NCP to obtain a new one. The following is an illustration.

            $$
\begin{tikzpicture}[scale=1]

% --- BEFORE (left side) ---
\draw[red] (-8,0) .. controls (-6,1) .. (-4,0); 
\filldraw[red] (-8,0) circle (2pt) node[anchor=north]{$2i{-}2$}; 
\filldraw[red] (-4,0) circle (2pt);

\draw[blue] (-7,0) .. controls (-6,0.5) .. (-5,0); 
\filldraw[blue] (-7,0) circle (2pt) node[anchor=north]{$2i$}; 
\filldraw[blue] (-5,0) circle (2pt);

\filldraw[yellow] (-6,0) circle (2pt);

% --- ARROW ---
\draw[->, thick] (-3.2,0.5) -- (-1.8,0.5) node[midway, above] {\small merge colors};

% --- AFTER (right side) ---

% Green arc: 1st to 2nd
\draw[green!70!black] (0,0) .. controls (0.5,0.5) .. (1,0);
\filldraw[green!70!black] (0,0) circle (2pt) node[anchor=north]{$2i{-}2$};
\filldraw[green!70!black] (1,0) circle (2pt) node[anchor=north]{$2i$};

% Yellow middle point (3rd)
\filldraw[yellow] (2,0) circle (2pt);

% Green arc: 2nd to 4th
\draw[green!70!black] (1,0) .. controls (2,0.5) .. (3,0);
\filldraw[green!70!black] (3,0) circle (2pt);

% Green arc: 4th to 5th
\draw[green!70!black] (3,0) .. controls (3.5,0.5) .. (4,0);
\filldraw[green!70!black] (4,0) circle (2pt);

\end{tikzpicture}
$$

        \end{enumerate}
    
    \end{enumerate}
\end{proof}
\textbf{Step 2: Verify the even condition.} We verify that $\cL$ is an even NCP (see \cref{even NCP}). By definition of $b_T$, we have $0\in L_0$. Consider an even part $2x_j\in \lambda$. Let $r_j$ and $r_j'$ be the multiplicities of $2x_j$ in $\lambda$ and $\lambda'$. Provided that $\cL'$ is even, the claim that $\cL$ satisfies Condition 2 in \cref{even NCP} follows from the following observations.
\begin{enumerate}
    \item When $d$ is of type (I$\pm$), we have $r_j= r_j'\pm 2$ if $2x_j= 2i$ and $r_j= r_j'$ otherwise. Hence, each $r(L_i)$ changes by an even number, namely $0$, $2$, or $-2$.
    \item When $d$ has type (N), we have that $2i - 2$ is a part of $\lambda'$ and $2i$ is a part of $\lambda$. In particular, $\lambda$ is obtained from $\lambda'$ by replacing the part $2i - 2 \in \lambda'$ with $2i$. Moreover, by the construction in \cref{clusters def}, the two numbers $2i$ and $2i - 2$ are always sent to the same open cluster. Therefore, the numbers $r(L_i)$ do not change.
   
\end{enumerate}

%are the same unless $2x_j= 2i$ when $d$ has type (I+) or (I-). When $2x_j=2i$. We have $r_j-r_{j'}= 2, -2$ and $0$ when $d$ has type (I+), (I-) and (N). Therefore, the numbers $r(L_i)$ stay even.

%Think about the proof and if I want to write anything about non-special orbit.

\section{A fixed-point variety and $\Stab(e)$}
In this section, we finish the proof of \cref{main thm} by showing $S(e)\subset \Stab(e)$ for $e\in \fsp_{2n}$. We first recall some basic Lie theory and introduce a fixed-point subvariety of $\spr$ that helps to determine the set $\Stab(e)$. 

From $\lambda\in \cP_C(2n)$, we obtain two subpartitions $\lambda^0$ and $\lambda^1$ that consist of the even and odd parts of $\lambda$, respectively. In the notation of Section 2, $\lambda^0$ can be written as $((2x_i)^{r_i})_{1\leqslant i\leqslant \ell}$ where the superscripts denote the multiplicities of the parts. Since $\lambda\in \cP_C(2n)$, the partition $\lambda^1$ takes the form $((2y_i+1)^{2q_i})_{1\leqslant i\leqslant \ell'}$, where $\ell'$ is the number of distinct odd parts in $\lambda$. 

From the Jacobson-Morozov theorem (see, e.g., \cite[Section 3.7]{CG}), there exists an $\fsl_2$-triple in $\fsp_{2n}$ containing $e$. Consider such a triple $(e,h,f)$, and let $Q_e$ be its stabilizer in $Sp_{2n}$. Let $T_h$ be the one-dimensional torus in $Sp_{2n}$ with Lie algebra $\Lie(T_h) = \bC h$. Then $Q_e$ and $T_h$ naturally act on $\spr$. 

From \cite[Section 5]{collingwood1993nilpotent}, the group $Q_e$ is isomorphic to a product $\prod_{i=1}^{\ell} O_{r_i}\times \prod_{i=1}^{\ell'} Sp_{2q_i}$. We may view $A_e\cong (\cyclic{2})^{\oplus \ell}$ as the component group of $Q_e$. Moreover, we can choose a subgroup of $\prod_{i=1}^{\ell} O_{r_i}$ that projects isomorphically onto $A_e$. Thus we can regard $A_e$ as a subgroup of $\prod_{i=1}^{\ell} O_{r_i}\subset Q_e$. The action of $A_e$ on $\Irr(\spr)$ is independent of the choice of the subgroup. 

Let $\fix$ denote the fixed-point variety $\spr^{T_h}$. This variety is smooth and projective (see, e.g., \cite[3.6 (a)]{DLP}). We write $\Con(\fix)$ for the set of connected components of $\fix$. Since the actions of $A_e$ (viewed as a subgroup of $Q_e$) and $T_h$ commute, $A_e$ acts on $\fix$ and therefore on $\Con(\fix)$. 

Consider the standard representation $V$ of $\fsp_{2n}$. The torus $T_h$ acts on $V$, yielding a mod-$2$ weight decomposition $V=V_0\oplus V_1$. Both subspaces $V_0$ and $V_1$ are stable under the action of $e, h$ and $f$ (viewed as an endomorphism of $V$). The restriction of $e$ to $V_0$ and $V_1$ gives us two nilpotent elements $e_0$ and $e_1$, with partitions $\lambda^0, \lambda^{1}$, respectively. 

The restrictions of the triple ($e$, $f$, $h$) to $V_0$ and $V_1$ yield two $\fsl_2$-triples associated with $e_0$ and $e_1$. The groups $Q_{e_0}$ and $Q_{e_1}$ then naturally identify with the first and the second factors in the isomorphism $Q_e\cong \prod_{i=1}^{\ell} O_{r_i}\times \prod_{i=1}^{\ell'} Sp_{2q_i}$. Since $Q_{e_1}$ is connected, the natural injection $Q_{e_0}\hookrightarrow Q_{e}$ induces a bijection between $A_e$ and $A_{e_0}$. Therefore, it makes sense to view $\Irr(\cB_{e_0})$ as a $A_e$-set.

The nilpotent element $e_0 \in \fsp(V_0)$ is even, and therefore Richardson. We cite an important result from \cite{DLP} which relates the sets $\Irr(\cB_{e_0})$ and $\Con(\cB_{e_0}^{gr})$.
\begin{pro}\cite[Corollary 3.3]{DLP} \label{from dlp}
    Consider $X \in \Con(\cB_{e_0}^{gr})$, the closure of its attracting locus in $\cB_{e_0}$ is a single irreducible component of $\cB_{e_0}$. Therefore, $\Con(\cB_{e_0}^{gr}) \cong \Irr(\cB_{e_0}^{gr})$ as $A_e$-sets.
\end{pro}
Next, we state two main results of this section.
\begin{pro}\label{reduce to even}
    There is a (noncanonical) injection of $A_e$-sets $$\Irr(\cB_{e_0})\hookrightarrow \Irr(\spr).$$
\end{pro}

\begin{pro} \label{enough stab}
    Assume that the partition $\lambda$ of $e$ is of the form $((2x_i)^{r_i})_{1\leqslant i\leqslant \ell}$ with $x_1>\dots > x_\ell$. For any subgroup $A\in S(e)$, there exists a component $X_A\in \Con(\fix)$ whose stabilizer is $A$.
\end{pro}
We first explain how these propositions imply \cref{main thm}; their proofs are given afterwards.
\begin{proof}[Proof of \cref{main thm}]
   Recall from \cref{from dlp} that the sets $\Con(\cB_{e_0}^{gr})$ and $\Irr(\cB_{e_0}^{gr})$ are isomorphic as $A_{e_0}$-sets. Hence, by \cref{enough stab}, we have $S(e_0) \subset \Stab(e_0)$. Next, \cref{reduce to even} implies $S(e_0) \subset \Stab(e_0) \subset \Stab(e)$. Finally, since the description of $S(e)$ in \cref{main thm} does not depend on the odd parts of $\lambda$, we see that $S(e) = S(e_0) \subset \Stab(e) \subset S(e)$, where the last containment follows from \cref{main combi}. Therefore, we have $S(e)= S(e_0)= \Stab(e)$.

\end{proof}

\begin{proof}[Proof of \cref{reduce to even}]
    The proof consists of two steps.
    \begin{enumerate}
        \item \textbf{Step 1:} We construct an injection $\iota$ from $\Irr(\cB_{e_0}) = \Con(\cB_{e_0}^{gr})$ to $\Con(\spr)$.
        \item \textbf{Step 2:} We identify the image of this injection as a subset of $\Irr(\spr)$ via the closures of corresponding attracting loci.
    \end{enumerate}

    \textbf{Step 1}. We begin by discussing some properties of the connected components of $\fix$. An element of $\fix$ is an isotropic flag $U^\bullet= U^1\subset \dots \subset U^{2n}$ of $V$ that is stable under $e$ and $T_h$. Each such flag $U^\bullet\subset \fix$ determines a unique tuple of $T_h$-weights $w(U^\bullet) =(w_1,\dots,w_{2n})$ such that the $T_h$-weights of $U^i$ are $w_1,\dots,w_i$ as a multiset. Given a tuple of $2n$-weights $\alpha$, define the subvariety $$X_\alpha:= \{U^\bullet\subset \fix, w(U^\bullet)=\alpha\}.$$ This $X_\alpha$ is either empty or an $A_e$-orbit of $\Con(\fix)$. For a tuple $\alpha$, let $\alpha^0$ and $\alpha^1$ be the subtuples consisting of the even and odd weights, respectively. Then $X_{\alpha^0}$ and $X_{\alpha^1}$ can be viewed as subvarieties of $\cB_{e_0}^{gr}$ and $\cB_{e_1}^{gr}$, respectively. Moreover, we have an isomorphism $X_\alpha \cong X_{\alpha^0} \times X_{\alpha^1}$ (\cite[Section 9.1]{hoang2024geometryfixedpointsloci}).

    Recall that we write $\lambda^0 = ((2x_i)^{r_i}){0 \leqslant i \leqslant \ell}$ and $\lambda^1 = ((2y_i+1)^{2q_i}){0 \leqslant i \leqslant \ell'}$. Let $n_0 = \sum_{i=1}^{\ell} x_i r_i$ and $n_1 = \sum_{i=1}^{\ell'} (2y_i + 1) q_i$, so that $n = n_0 + n_1$. In what follows, we consider a distinguished tuple of weight $\alpha^1$ so that the geometry of $X_{\alpha^1}$ is nice.

    For $i \geqslant 0$, write $\beta_i$ for the tuple of weights $(2i, 2i - 2, \dots, -2i)$, and let $(\beta_i)^l$ denote the concatenation of $l$ copies of $\beta_i$. Define $\beta$ as the concatenated tuple $((\beta_{2y_1})^{q_1}, \dots, (\beta_{2y_{\ell'}})^{q_{\ell'}})$. Let $\beta^r$ be the reverse of $\beta$, obtained by listing its entries from right to left. Write $-\beta^r$ for the tuple obtained by changing the sign of each weight in $\beta^r$. Let $\alpha^1$ be $(\beta, -\beta^r)$.

    From \cite[Section 9.3]{hoang2024geometryfixedpointsloci}, the variety $X_{(\beta, -\beta^r)}$ is a product of towers of projective bundles, and hence it is a connected component of $\cB_{e_1}^{gr}$. Now, given $\alpha^0$ such that $X_{\alpha^0} \subset \cB_{e_0}^{gr}$ is nonempty, define the tuple $\alpha = (\beta, \alpha^0, -\beta^r)$. Then we have an isomorphism $$X_{(\beta,\alpha^0, -\beta^{r})}\cong X_{(\beta,-\beta^{r})}\times X_{\alpha^0}.$$
    
    It follows that there is a bijection between the $A_e$-sets $\Con(X_{\alpha^0})$ and $\Con(X_{(\beta, \alpha^0, -\beta^r)})$. As a result, we obtain an injection of $A_e$-sets
    $$\iota: \Con(\cB_{e_0}^{gr})\hookrightarrow \Con(\fix)$$
    whose image consists of the connected components of those varieties
    $$\{X_\alpha \subset \fix, \alpha \text{ starts with } \beta \text{ and ends with } -\beta^r\}.$$

    \textbf{Step 2.} We claim that if $\alpha$ begins with $\beta$ and ends with $-\beta^r$, then the attracting locus of the $T_h$-action of any connected component of $X_\alpha$ has maximal dimension, equal to $\dim \spr$. This implies that $\iota(\Con(\cB_{e_0}^{gr}))$ naturally identifies with a subset of $\Irr(\spr)$, and the proposition follows.

    We recall a dimension formula from \cite[Proposition 3.2]{DLP}. The action of $T_h$ on $\fg = \fsp_{2n}$ induces a weight decomposition $\fg = \bigoplus \fg(i)$. Let $\fp := \bigoplus_{i \geqslant 0} \fg(i)$ be the corresponding parabolic subalgebra, and let $P$ be the associated parabolic subgroup. The group $P$ acts on the flag variety of $G$ with finitely many orbits $\cO$. For each such orbit, define $\cB_{e,\cO} := \spr \cap \cO$. Each nonempty variety $\cB_{e,\cO}$ is the attracting locus of a component of $\fix$ (\cite[Section 3.4]{DLP}). The dimension of $\cB_{e,\cO}$ is computed as follows. Let $\fb$ (a Borel subalgebra of $\fg$) be a point in $\cB_{e,\cO}$. Then:
    
    $$\dim \cB_{e,\cO}= \dim (\fp/\fp\cap \fb)- \dim (\fg_{\geqslant 2}/\fg_{\geqslant 2}\cap \fb),$$ 
    in which $\fg_{\geqslant 2}= \bigoplus_{i\geqslant 2} \fg(i)$. Hence, for $\fb \in \fix$ (i.e., a Borel subalgebra stable under the $T_h$-action), the dimension of the corresponding attracting locus is given by $$\dim \cB_{e,\cO}= \dim \cB(\fg(0))+\dim \fg(1)- \dim \fb\cap \fg(1)$$     
    where $\cB(\fg(0))$ denotes the flag variety of $\fg(0)$. This quantity depends only on $\dim (\fb \cap \fg(1))$.

    For $\fb \in X_\alpha$, the dimension of the intersection $\dim \fb\cap \fg(1)$ can be computed in terms of the weight tuple $\alpha = (\alpha_1, \dots, \alpha_{2n})$. Specifically, $\dim (\fb \cap \fg(1))$ is equal to half the number of pairs $(i < j)$ such that $\alpha_i - \alpha_j = 1$. Let $E(\alpha)$ denote the number of such pairs. The proof concludes with the calculation in \cref{correct dimension}.
\end{proof}
In the following lemma, we use the notation $\alpha^0$, $\beta$ as in the previous proof. Recall that we write $V$ for the standard representation of $\fg$ and we have a mod-$2$ weight decomposition $V=V_0\oplus V_1$.
\begin{lem}\label{correct dimension}
    For any $\alpha^0$ consisting of $2n_0$ weights of $V_0$, we have the following.
    \begin{equation} \label{dim equation}
        \dim \spr- \dim \cB(\fg(0))= \frac{1}{2} \dim \fg(1)= \dim \fg(1)- \frac{1}{2}E(\beta, \alpha, -\beta^r)
    \end{equation}
\end{lem}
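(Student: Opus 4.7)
The claimed equation splits into two independent equalities, which I handle in turn.

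The first equality $\dim \spr - \dim \cB(\fg(0)) = \tfrac{1}{2}\dim \fg(1)$ is purely Lie-theoretic and will follow from the Jacobson--Morozov grading. The standard facts that $\operatorname{ad} e$ is injective on $\bigoplus_{i\leqslant -1} \fg(i)$ and surjective onto $\bigoplus_{i\geqslant 1} \fg(i)$ yield $\dim \fg^e = \dim \fg(0) + \dim \fg(1)$. Combined with the dimension formula $\dim \spr = \tfrac{1}{2}(\dim \fg^e - \operatorname{rank}\fg)$ and the analogous $\dim \cB(\fg(0)) = \tfrac{1}{2}(\dim \fg(0) - \operatorname{rank}\fg)$, the latter using that a Cartan of $\fg$ sits inside $\fg(0)$, subtraction gives the first equality.

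The second equality reduces to the combinatorial identity $E(\beta, \alpha^0, -\beta^r) = \dim \fg(1)$, which I plan to prove by computing both sides directly in terms of $T_h$-weight multiplicities on $V$. Using $\fsp(V) \cong S^2 V$ as $\fsl_2$-modules, $\dim \fg(1)$ equals the number of unordered pairs of positions in $V$ whose weights sum to $1$; since $1$ is odd, any such pair mixes the two parities. The weights of $V_1$ are of one fixed parity with multiplicities $m_{V_1}(a) = 2\, m_\beta(a)$, while those of $V_0$ have the opposite parity and realise $m_{\alpha^0}$; applying the symmetries $m_\beta(-w) = m_\beta(w)$ and $m_{\alpha^0}(-w) = m_{\alpha^0}(w)$, this gives
\[
    \dim \fg(1) = 2 \sum_{a\text{ of }\beta\text{-parity}} m_\beta(a)\, m_{\alpha^0}(a-1).
\]

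For $E(\beta, \alpha^0, -\beta^r)$, I partition the pairs $i < j$ with $\alpha_i - \alpha_j = 1$ according to which of the three blocks $\beta$, $\alpha^0$, $-\beta^r$ contains $i$ and $j$. A unit difference forces one entry from $\beta\cup(-\beta^r)$ and one from $\alpha^0$, and the placement of $\beta$ at the front and $-\beta^r$ at the back leaves exactly two configurations compatible with $i < j$: either $i \in \beta$ and $j \in \alpha^0$, contributing $\sum_{a\text{ of }\beta\text{-parity}} m_\beta(a)\, m_{\alpha^0}(a-1)$, or $i \in \alpha^0$ and $j \in -\beta^r$, contributing $\sum_{a\text{ of }\alpha^0\text{-parity}} m_{\alpha^0}(a)\, m_\beta(a-1)$. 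The substitution $a \mapsto 1-a$ combined with the two weight-symmetries converts the second sum into the first, so $E(\beta, \alpha^0, -\beta^r) = 2\sum_a m_\beta(a)\, m_{\alpha^0}(a-1) = \dim \fg(1)$.

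The whole argument is essentially bookkeeping; the only conceptual input is that $\beta$ and $-\beta^r$ carry identical weight multisets (reversal plus negation) and that every weight multiset in sight is symmetric about $0$, forcing the two cases to balance. The one risk worth flagging is mistracking parities through the variable change, which I would guard against by verifying a small example such as $\lambda = (2,1,1)$ before writing the proof out in full.
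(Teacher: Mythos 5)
Your proposal is correct, and for the second equality it is essentially the paper's own computation: you decompose the pairs counted by $E(\beta,\alpha^0,-\beta^r)$ according to which blocks the two positions lie in, observe that only the block pairs $(\beta,\alpha^0)$ and $(\alpha^0,-\beta^r)$ can produce a unit difference, and use the negation-symmetry of the weight multisets of $\beta$ and $\alpha^0$ to match the total against $\dim\fg(1)=\sum_{i\geqslant 0}\dim V(i)\dim V(i+1)$. The one hypothesis you should state explicitly is that $X_{\alpha^0}\neq\emptyset$, so that $m_{\alpha^0}(b)=\dim V_0(b)$; the paper uses this silently as well.

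For the first equality your route is genuinely different from, and cleaner than, the paper's. The paper proves $\dim\spr-\dim\cB(\fg(0))=\tfrac{1}{2}\dim\fg(1)$ by writing $\fg(0)=\fg_0(0)\oplus\fg_1(0)$, invoking \cite[Corollary 3.3]{DLP} to identify $\dim\cB(\fg_i(0))$ with $\dim\cB_{e_i}$, and then evaluating $\dim\fg^e-\dim\fg_0^{e_0}-\dim\fg_1^{e_1}$ via the explicit partition-multiplicity formula for centralizer dimensions from \cite{collingwood1993nilpotent}. You instead use the $\fsl_2$-theoretic identity $\dim\fg^e=\dim\fg(0)+\dim\fg(1)$ (telescoping the ranks of $\mathrm{ad}\,e:\fg(i)\to\fg(i+2)$) together with $\dim\spr=\tfrac{1}{2}(\dim\fg^e-\mathrm{rank}\,\fg)$ and $\dim\cB(\fg(0))=\tfrac{1}{2}(\dim\fg(0)-\mathrm{rank}\,\fg(0))$, noting that a Cartan of $\fg$ lies in $\fg(0)$ so the ranks cancel. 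This avoids both the DLP citation and the partition bookkeeping, and works verbatim for any semisimple $\fg$, whereas the paper's argument is tied to the classical combinatorics; the paper's version has the mild virtue of exhibiting the decomposition $\dim\spr=\dim\cB_{e_0}+\dim\cB_{e_1}+\tfrac{1}{2}\dim\fg(1)$, which resonates with the product structure $X_\alpha\cong X_{\alpha^0}\times X_{\alpha^1}$ used in the surrounding proof, but this is not needed for the lemma itself.
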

\begin{proof}
    Let $V=\oplus V(i)$ be the weight space decomposition of the $T_h$-action. Then we have $$\dim \fg(1)=\sum_{i\geqslant 0}\dim V(i)\dim V(i+1).$$ Next, consider the number $E(\beta, \alpha^0,-\beta^r)$. Each weight $2t$ in $\beta$ contributes $\dim V({2t-1})$ (this is the multiplicity of $2t-1$ in $V_0$) to $E(\beta, \alpha^0,-\beta^r)$. Similarly, the contribution of a weight $2t$ in $-\beta^r$ is $\dim V(2t+1)$. The tuple $\beta$ consists of $\frac{1}{2}\dim V(2t)$ weights $2t$, so we get 
    $$E(\beta, \alpha^0,-\beta^r)= \frac{1}{2}(\sum_{i\in \bZ} \dim V(i)\dim V(i+1))= \sum_{i\geqslant 0}\dim V(i)\dim V(i+1)$$
    because $\dim V(i)= \dim V(-i)$. Therefore, we get 
    $$ \dim \fg(1)- \frac{1}{2}E(\beta, \alpha, -\beta^r)= \frac{1}{2}(\sum_{i\geqslant 0}\dim V(i)\dim V(i+1))= \frac{1}{2}\dim \fg(1).$$
    This proves the second equality in (\ref{dim equation}). We proceed to prove the first equality in (\ref{dim equation}). 
    
    Recall that we write $e_0$ and $e_1$ for the restriction of $e$ to $V_0$ and $V_1$. Consider $\fg_0= \fsp(V_0)$ and $\fg_1= \fsp(V_1)$ as subalgebras of $\fg= \fsp(V_0\oplus V_1)$. Write $\fg^e, \fg_0^{e_0}$ and $\fg_1^{e_1}$ for the centralizers of $e, e_0$ and $e_1$ in $\fg, \fg_0$ and $\fg_1$, respectively. The two algebras $\fg_0$ and $\fg_1$ are stable under the action of $T_h$, so it makes sense to write $\fg_0(0)$ and $\fg_1(0)$ for the corresponding eigenspaces. We have $\fg(0)= \fg_0(0)\oplus \fg_1(0)$. By \cite[Corollary 3.3]{DLP}, the dimensions of the Springer fibers $\cB_{e_0}$ and $\cB_{e_1}$ are the dimensions of the flag varieties of $\fg_0(0)$ and $\fg_1(0)$. Hence $\dim \spr- \dim \cB(\fg(0))$ $= \dim \spr -\dim \cB_{e_0}-\dim \cB_{e_1}$.

    By \cite[Corollary 3.3.24]{CG}, and \cite[Corollary 6.1.4]{collingwood1993nilpotent}, we have $\dim \spr= \frac{1}{2}(\dim \fg^e- n)$, where $n$ is the rank of $\fg$. The same formula applies to $e_0\in \fg_0$ and $e_1\in \fg_1$, so we have $\dim \spr -\dim \cB_{e_0}-\dim \cB_{e_1}$ $=\frac{1}{2}(\dim \fg^e- \dim \fg_{0}^{e_0}- \dim \fg_{1}^{e_1})$. To compute this number, we recall a formula in the proof of \cite[Theorem 6.1.3]{collingwood1993nilpotent}. This formula expresses the dimension of $\dim \fg^e$ in terms of the partition $\lambda$. Write $p_i$ for the multiplicity of the part $i$ in $\lambda$, we then have
    $$\dim \fg^e= p_1(\frac{1}{2}p_1+\frac{1}{2}+p_2+p_3+\dots)+ 2p_2(\frac{1}{2}p_2+ p_3+ p_4+\dots)+ 3p_3(\frac{1}{2}p_3+\frac{1}{2}+p_4+p_6+\dots)+\dots$$
    
    Note that the same formula applies for $\fg_{0}^{e_0}$ and $\fg_{1}^{e_1}$ and the two partitions $\lambda^0$ and $\lambda^1$. Hence, we have 
    $$\dim \fg^e- \dim \fg_{0}^{e_0}- \dim \fg_{1}^{e_1}= p_1(p_2+p_4+\dots)+ 2p_2(p_3+p_5+\dots)+3p_3(p_4+p_5+\dots)+\dots$$
    $$=(p_1+p_3+\dots)(p_2+p_4+\dots)+ (p_2+p_4+\dots)(p_3+p_5+\dots)+\dots = \dim V(0)\dim V(1)+ \dim V(1)\dim V(2)+\dots$$
     In conclusion, we have obtained
     $$\dim \spr- \dim \cB(\fg(0))= \frac{1}{2}(\dim \fg^e- \dim \fg_{0}^{e_0}- \dim \fg_{1}^{e_1})= \frac{1}{2}(\sum_{i\geqslant 0}\dim V(i)\dim V(i+1))= \frac{1}{2}\dim \fg(1).$$
\end{proof}

We conclude this section with the proof of \cref{enough stab}.
\begin{proof}[Proof of \cref{enough stab}]
    This proposition is a corollary of \cite[Lemma 9.17]{hoang2024geometryfixedpointsloci}, we refer the reader there for more details. Here we briefly explain the relevant content of \cite[Lemma 9.17]{hoang2024geometryfixedpointsloci} and how \cref{enough stab} follows. 
     
    Let $l$ be the number of parts of $\lambda$, so $l= \sum_{i=1}^\ell r_i$. Let the standard form of $\lambda$ be $(\lambda_1\geqslant \dots\geqslant \lambda_l)$. Consider an abstract vector space $V\cong \bC^l$ with a basis $v_1,\dots,v_l$ and the dot product. For a subset $J$ of $\{1,\dots,\l\}$, we write $V_J$ for the subspace $\Span(v_j)_{j\in J}$. On $V$, we have a natural action of $$\prod_{i=1}^{\ell} O_{r_i}= \prod_{i=1}^{\ell} O (V_{[1+r_1+\dots+r_{i-1}, r_1+\dots+r_i]}).$$ The two key points of \cite[Lemma 9.17]{hoang2024geometryfixedpointsloci} are the following:
    \begin{enumerate}
        \item Consider a chain of intervals with even lengths $$C= \{[a_1,b_1]\subsetneq\dots \subsetneq [a_p,b_p]\subset [1,l]\}$$
        where each interval $[a_j,b_j]$ is of the form $[1+r_1+\dots+r_{i_1}, r_1+\dots+r_{i_2}]$ for some $1\leqslant i_1< i_2\leqslant \ell$.
        Consider the variety $X_C$ that parameterizes the partial isotropic flags $$\{U^1\subset\dots.\subset U^p\subset V, U^i\subset V_{[a_i,b_i]}, \dim U^i= \frac{1}{2}(b_i-a_i+1)\}.$$ Then there exists a tuple $\alpha$ so that $X_\alpha \subset \fix$ is isomorphic to a tower of projective bundles over $X_C$.  
        \item For disjoint chains $C_1,..,C_v$ of such intervals, there exists a tuple $\alpha$ so that $X_\alpha \subset \fix$ is isomorphic to a tower of projective bundles over $X_{C_1}\times\dots \times X_{C_v}$.
    \end{enumerate}

    Now recall that each subgroup $A_\cL \in S(e)$ is defined via an even NCP $\cL = \{L_0, L_1, \dots, L_k\}$ of $\{2x_1, \dots, 2x_\ell\}$. By \cref{end point}, an NCP is determined by the numbers $\min(L_i)$ and $\max(L_i)$ for $1 \leqslant i \leqslant k$.

    Let $m_i = \sum_{j=1}^{\min(L_i)} r_j$ and $m_i' = \sum_{j=1}^{\max(L_i)} r_j$. Then the intervals $J_i := [m_{i-1} + 1, m_i'] \subset [1, l]$ for $1 \leqslant i \leqslant k$ satisfy the condition that for any two distinct intervals, they are either disjoint or one contains the other. Moreover, each $J_i$ has even length by the assumption that $\cL$ is an even NCP (\cref{even NCP}).

    Applying the two key points above to the set of intervals $\{J_i\}$, we obtain a variety $X_{A'} \subset \fix$. Choosing a connected component $X_A$ of $X_{A'}$, we obtain the desired component. The verification that this orbit has stabilizer $A_\cL$ follows from the fact that each maximal orthogonal Grassmannian of $V_{[m_{i-1}+1, m_i']}$ has two connected components, which are permuted by the component group of $O(V_{[m_{i-1}+1, m_i']})$.
\end{proof}
%fix the mess with  the 0-cluster
\section{Applications and conjectures}
%maybe there is no proof to write, if Lusztig already defined it
%left cell and constructible repn
%Lustig's quotient, Lusztig's generator. From Sommer's paper, Lusztig's quotient is generated by $t_l$ so that\dots Describe the map explicitly. No need to bring  repn of Weyl group in. The statement is, under the quotient map, the set S(e) is sent to the set that comes from left cell modules. Discuss the map as explicitly as possible. Maybe write a remark about TL  pattern. Then discuss other things about domino tableau (so we need left cell here). State the best possible result. NCP seems to fits his non intersecting pattern. Or TL? Get the formula then choose what to do.

%Write the bijection first
%Find other interpretation on internet
%Then write Section 4
%Decide what to do with NCP stuff later+ fix things related to the $0$-cluster.

%say something about special element?
In this section, we consider only special nilpotent elements $e$. To a special nilpotent orbit, Lusztig has assigned a quotient $\bar{A}(e)$ of $A_e$, known as Lusztig's canonical quotient, along with a subset $c_e \subset W$ called a two-sided cell. The two-sided cells $c_e$ are further partitioned into finitely many left cells. Each left cell gives rise to a left cell representation (constructible representation) of $W$. These representations are introduced in \cite{Lusztig1979} and are classified by certain subgroups $H_\sigma$ of $\bar{A}(e)$ (see, e.g., \cite{Lusztig1982}, \cite[Section 6]{Losev_2014}). 
\subsection{S(e) and Lusztig' subgroups}
Consider a partition $\lambda = (2\lambda_1\geqslant \dots \geqslant 2\lambda_l)$ of $2n$ and a corresponding nilpotent element $e\in \fsp_{2n}$. Let $W$ be the Weyl group of $\fsp_{2n}$. The orbit $G.e$ is a special nilpotent orbit in $\fsp_{2n}$ (see, e.g., \cite[Section 7]{collingwood1993nilpotent}). In what follows, we briefly recall combinatorial descriptions of $\bar{A}(e)$ and subgroups $H_\sigma$. Our exposition follows \cite[Section 6]{Losev_2014} and \cite[Section 5]{Sommers2001}.

%From Springer theory, Weyl group $W$ on $H^{top}(\spr)$ (see, e.g ). This action is multiplicity free. Consider the irreducible representations of $W$ that appear in both $H^{top}(\spr)$ and some left cell reprsentations of $c_e$. Taking their direct sum, we get $Spr(\bO) \subset H^{top}(\spr)$. Let $A'\subset A_e$ be the subgroup that stabilizes $Spr(\bO)$. The Lusztig quotient $\bar{A}(e)$ is then $A_e/A'$.

Recall that $A_e\cong (\cyclic{2})^{\oplus \ell}$ is generated by $t_1,\dots,t_\ell$ where $\lambda = ((2x_i)^{r_i})_{1\leqslant i\leqslant \ell}$ and $x_1>\dots>x_\ell$. Let $K_e$ be the kernel of the map $A_e\twoheadrightarrow \bar{A}(e)$.

\begin{pro}[\cite{Sommers2001}, Theorem 6] \label{Lquotient}
The subgroup $K_e \subset A_e$ is generated by the elements $t_m t_{m+1}$ for those indices $1 \leqslant m \leqslant \ell$ such that $r_1 + \dots + r_m$ is odd, where we set $t_{\ell+1} = 1$. In particular, $\bar{A}(e) \cong A_e$ if all $r_i$ are even.
\end{pro}
As a consequence, the quotient $\bar{A}(e)$ is generated by the images of those $t_m$ for which $r_1 + \dots + r_m$ is even. Let $1 \leqslant m_1 < \dots < m_e \leqslant \ell$ be the indices satisfying this condition. For notational simplicity, we will write $t_{m_i}$ for the image of $t_{m_i}$ in $\bar{A}(e)$.

We now describe the subgroups $H_\sigma$ in terms of these generators. These subgroups are in bijection with a set of certain Temperley–Lieb (TL) patterns (see, e.g., \cite[Section 6.5]{Losev_2014}). In particular, consider the set $$Z=\{1, 2{m_i}, 2{m_i}+1\}_{1\leqslant i\leqslant e}$$ consisting of $2e + 1$ points. The following definition is rephrased from \cite[Section 6.5]{Losev_2014}.
\begin{defin} \label{TL}
    A TL pattern $T_\sigma$ of $Z$ is a partition of $Z$ into $e+1$ subsets $M_0,\dots,M_e$ that satisfy the following conditions:
\begin{enumerate}
    \item $M_0$ consists of a single odd number $d$. Each $M_i$ consists of two elements $a_i <b_i$ such that $(a_i-d)(b_i-d)> 0$.
    \item For any $i \neq j$, the intervals $[a_i, b_i]$ and $[a_j, b_j]$ do not intersect, or one is contained in the other.
\end{enumerate}
\end{defin}
Given such a $T_\sigma$, the corresponding subgroup $H_\sigma\subset \bar{A}(e)$ is generated by the following elements
\begin{itemize}
    \item $t_{m_{i_1}}t_{m_{i_2}}$ where $\{2m_{i_1}, 2m_{i_2-1}+1\}$ is a subset in $T_\sigma$ for some $i_1<i_2\leqslant e$,
    \item and $t_{m_j}$ if $T_\sigma$ contains the set $\{2m_j, 2m_e+1\}$.
\end{itemize}
  In the notation of \cite[Section 6.5]{Losev_2014}, their generators for $\bar{A}(e)$ correspond to our elements $t_{m_1} t_{m_2},\ t_{m_2} t_{m_3},$ $ \dots,\ t_{m_{e-1}} t_{m_e}$ and $t_{m_e}$. Our description of $H_\sigma$ follows from \cite[Section 6.5]{Losev_2014} by a simple change of variables. The following remark explains why $H_\sigma$ can be determined from subsets of the form ${2m_{i_1},\ 2m_{i_2 - 1} + 1}$ in $T_\sigma$. %\footnote{The author thanks Eric Sommers for pointing out this change of generators.}. 
\begin{rem}\label{TL end point}
    Condition 2 in \cref{TL} implies that each subset in a TL pattern $T_\sigma$ always consists of one even and one odd element. As a result, a TL pattern $T_\sigma$ of the set $Z = \{1, 2m_1, \dots, 2m_e + 1\}$ is uniquely determined by either of the following:
    \begin{itemize}
        \item All subsets of the form $\{2a, 2b + 1\}$ in $T_\sigma$, or
        \item All subsets of the form $\{2a + 1, 2b\}$ in $T_\sigma$.
    \end{itemize}
    For example, suppose that we know all subsets of the form $\{2a, 2b + 1\}$ in $T_\sigma$. Let $Z'$ be the union of these subsets. If $Z' = \emptyset$, define $A = 2m_e + 2$; otherwise, let $A$ be the smallest element in $Z'$. Then $2A - 1$ must form its own singleton subset in $T_\sigma$. The remaining subsets are of the form $[c, d]$, where $c$ is even and $d$ is the smallest element in $Z \setminus Z'$ such that $d > c$.
\end{rem}

\begin{exa}
    Consider $\lambda = (100^3, 38^3, 16^2)$. Then $A_e$ is generated by $t_1, t_2, t_3$. The subgroup $K_e$ is generated by $t_1 t_2$, so $\bar{A}(e)$ is generated by $t_2$ and $t_3$, corresponding to the parts $38$ and $16$ in $\lambda$. The set $Z$ is $\{1, 4, 5, 6, 7\}$. We list the TL patterns $T_\sigma$ and their corresponding subgroups $H_\sigma$:
    \begin{enumerate}
        \item $T_\sigma = \{\{1\}, \{6,7\}, \{4,5\}\}$: $H_\sigma$ is generated by $t_3$ (from $\{6,7\}$) and $t_2 t_3$ (from $\{4,5\}$), so $H_\sigma = \bar{A}(e)$.
        \item $T_\sigma = \{\{1\}, \{4,7\}, \{5,6\}\}$: $H_\sigma$ is generated by $t_2$ (from $\{4,7\}$).
        \item $T_\sigma = \{\{5\}, \{1,4\}, \{6,7\}\}$: $H_\sigma$ is generated by $t_3$ (from $\{6,7\}$).
        \item $T_\sigma = \{\{7\}, \{1,4\}, \{5,6\}\}$: $H_\sigma$ is trivial.
        \item $T_\sigma = \{\{7\}, \{1,6\}, \{4,5\}\}$: $H_\sigma$ is generated by $t_2$ (from $\{4,5\}$).
    \end{enumerate}    
\end{exa}
Recall from Section 2 that we defined a set $S(e)$ consisting of stabilizers of the $A_e$-action on $\Irr(\spr)$. We have the following theorem.
\begin{thm} \label{evidence}
    Consider the quotient map $p: A_e\rightarrow \bar{A}(e)$. The images of the subgroups in $S(e)$ are precisely the subgroups $H_\sigma\subset \bar{A}(e)$ defined by the Temperley-Lieb patterns in \cref{TL}.
\end{thm}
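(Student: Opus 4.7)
The plan is to prove the set equality $p(S(e)) = \{H_\sigma\}$ via both inclusions, relating even NCPs of $B_\lambda$ to TL patterns of $Z$ through a combinatorial collapse-and-expansion procedure.

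First, I unpack both sides. By \cref{Lquotient}, the generators of $\bar{A}(e)$ collapse into blocks: $t_i \equiv t_{m_c}$ whenever $i \in C_c := [m_{c-1}+1, m_c]$ for $c = 1, \ldots, e$, and $t_i \equiv 1$ whenever $i \in (m_e, \ell]$. Hence $p(A_\cL)$ is generated by $t_{m_c} t_{m_{c'}}$ whenever some $L_s$ (with $s \geq 1$) meets both blocks $C_c$ and $C_{c'}$, and by $t_{m_c}$ whenever a $C_c$-element lies in $L_0$ or shares an $L_s$ with a trivial-tail element. For $H_\sigma$, a standard parity argument for noncrossing matchings shows each pair of $T_\sigma$ has one odd and one even element of $Z$. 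The pairs therefore split into \emph{useful} pairs $\{2m_u, 2m_v+1\}$ with $u \leq v$, each contributing the generator $t_{m_u} t_{m_{v+1}}$ (with the convention $t_{m_{e+1}} := 1$), and \emph{dummy} pairs $\{2m_u+1, 2m_v\}$ with $u < v$, contributing nothing.

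For the inclusion $p(S(e)) \subseteq \{H_\sigma\}$, given an even NCP $\cL$ I construct a TL pattern $T_\sigma(\cL)$ by collapsing each block $C_c$ of the NCP to its boundary pair $(2m_c, 2m_c+1)$ in $Z$. The block structure partitions the NCP positions of $B_\lambda \setminus \{0\}$ into $e+1$ consecutive intervals, and the evenness of $r(L_s)$ combined with the noncrossing condition forces the $L_s$-arcs to meet each block boundary an even number of times, so their projection onto $Z$ yields a noncrossing matching with one odd singleton (determined by the placement of $L_0$ relative to the blocks). A direct generator-by-generator comparison gives $p(A_\cL) = H_{T_\sigma(\cL)}$. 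For the reverse inclusion, given $T_\sigma$ I expand each useful pair $\{2m_u, 2m_v+1\}$ to an $L_s$-arc of an NCP connecting the boundaries of blocks $C_u$ and $C_{v+1}$ (treating the trivial tail appropriately when $v = e$); since $R_c = \sum_{i \in C_c} r_i$ is even for each nontrivial block, the resulting $\cL$ is an even NCP with $p(A_\cL) = H_{T_\sigma}$.

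The main technical difficulty is to rigorously carry out the collapse/expansion procedure in all cases, particularly for even NCPs whose $L_s$ partially meets multiple blocks (as in the example $\ell = 4$, $r_i = 1$, $L_1 = \{2x_1, 2x_4\}$). The evenness condition on $r(L_s)$ is exactly the combinatorial condition that lets the projected data form a valid TL pattern, and translating this into the TL noncrossing structure is the crux of the argument. Note also that the map $\cL \mapsto T_\sigma(\cL)$ need not be a bijection (distinct even NCPs may share the same $p$-image), but both inclusions together suffice to establish the claimed equality of subgroup sets.
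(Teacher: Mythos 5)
Your architecture is the same as the paper's: collapse the generators $t_i$ into the blocks $C_c=[m_{c-1}+1,m_c]$ dictated by \cref{Lquotient}, and match parts of an even NCP with Temperley--Lieb pairs through their endpoints (a part with $\max=2x_{a+1}$ and $\min=2x_b$ corresponding to $\{2a+1,2b\}$, the ``even-first'' pairs carrying the generators of $H_\sigma$). Your generator dictionary on both sides agrees with the paper's, and your expansion direction (TL pattern to even NCP) is essentially the paper's use of \cref{end point}: reconstruct the parts from their $\min/\max$ data, evenness being automatic because the sums $r_1+\cdots+r_{m_i}$ are even.

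The gap is the step you yourself flag as the crux and do not carry out. The assertion that evenness of $r(L_s)$ plus noncrossing ``forces the $L_s$-arcs to meet each block boundary an even number of times, so their projection onto $Z$ yields a noncrossing matching'' is not correct as stated, and the projection is not even well defined without further work. For $\ell=4$ and all $r_i=1$ (blocks $\{1,2\}$ and $\{3,4\}$), the even NCP $L_0=\{0,2x_1,2x_4\}$, $L_1=\{2x_2,2x_3\}$ has both parts meeting both blocks, so collapsing produces overlapping rather than disjoint images; and the even NCP $\{0,2x_3,2x_4\},\{2x_1,2x_2\}$ has the $L_1$-arc crossing the cut interior to the block $\{1,2\}$ exactly once. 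What is actually needed is (i) that after merging all parts which meet a common block, the result is a genuine NCP of the collapsed set $\{0,2x_{m_1},\dots,2x_{m_e}\}$, and (ii) that this merged partition computes $p(A_\cL)$. With (i) and (ii) the theorem reduces to the case where every $r_i$ is even, where the endpoint correspondence between NCPs of $\ell+1$ points and TL patterns of $2\ell+1$ points is clean --- this is exactly how the paper organizes the argument. So your plan is the right one, but the single nontrivial combinatorial verification is missing, and the parity statement offered in its place does not survive the very cases you cite as the difficulty.
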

\begin{proof}
    The proof consists of two steps.
    
    \textbf{Step 1: Reduction to the case }$\bar{A}(e)= A_e$. Consider an even NCP $\cL = {L_0, L_1, \dots, L_k}$ of $B_\lambda = {0, 2x_1, \dots, 2x_\ell}$ and the corresponding group $A_\cL$ described in Section 2.1. We now describe the image $p(A_\cL)$ in terms of this NCP. Recall that we write $m_1, \dots, m_e$ for the indices $m$ such that $r_1 + \dots + r_m$ is even. From $\cL$, we obtain an NCP $\bar{\cL}$ of the subset ${0, 2x_{m_1}, \dots, 2x_{m_e}}$ as follows.
    
    Consider the equivalence relation $\sim_p$ on $\{0,2x_{1},\dots,2x_{m_e}\}$ by $2x_i\sim_p 2x_j$ if there is some $1\leqslant l\leqslant e$ so that $m_{l-1}+1\leqslant i\leqslant j\leqslant m_l$. Write $M$ for the set $\{2x_i\}_{i>m_e}$. Then the sets $(L_i\setminus M)/\sim_p$ (some of them may be empty) give an NCP $\bar{\cL}$ of $(B_\lambda\setminus M)/\sim_p\cong \{0, 2x_{m_1},\dots, 2x_{m_e}\}$. 
    
    From $\bar{\cL}$, we obtain a group $A_{\bar{\cL}}$ in terms of $t_{m_i}$. From \cref{Lquotient}, regarding these $t_{m_i}$ as the generators of $\bar{A}(e)$, we may view $A_{\bar{\cL}}$ as a subgroup of $\bar{A}(e)$. From the construction, it follows that $p(A_\cL)= A_{\bar{\cL}}$.
    
   Consider an NCP $\bar{\cL} = \{\bar{L}_1, \dots, \bar{L}_k\}$ of the set $\{0, 2x_{m_1}, \dots, 2x_{m_e}\}$. By \cref{end point}, we can reconstruct an NCP $\cL$ of $B_\lambda$ using the data of $\min(\bar{L}_i)$ and $\max(\bar{L}_i)$ for each $i$. The resulting partition $\cL$ is even, due to the assumption that each partial sum $r_1 + \dots + r_{m_i}$ is even. Applying the process described in the previous paragraph to this $\cL$ recovers $\bar{\cL}$. Therefore, the image of $p$ is parametrized by the NCPs of $\{0, 2x_{m_1}, \dots, 2x_{m_e}\}$. In other words, to prove the proposition, it suffices to consider the case when $A_e \cong \bar{A}(e)$.

    \textbf{Step 2: A bijection between NCPs and TL patterns}. Assume that $r_1,\dots,r_\ell$ are even, or equivalently, $A_e \cong \bar{A}(e)$. Now $m_i= i$ for $1\leqslant i\leqslant \ell$. The set $S(e)$ is in bijection with the set of NCPs $\cL = {L_0, \dots, L_k}$ of ${0, 2x_1, \dots, 2x_\ell}$. From each TL pattern of $Z = {1, 2, \dots, 2\ell + 1}$, defined by $\ell + 1$ sets $M_0, \dots, M_\ell$, we construct an NCP by the following steps:
    \begin{enumerate}
        \item Identify the sets $M_{i_1},\dots,M_{i_d}$ of the form $\{2a_j+1<2b_j\}$ for some $0\leqslant a_j< b_j\leqslant \ell$ and $1\leqslant j\leqslant d$.
        \item Let $I_j$ be the interval $[a_j+1,b_j]$. Define $L_j=\{2x_l, l\in I_j\setminus (\bigcup_{I_{j'}\subsetneq I_j} I_{j'})\}$ for $1 \leqslant j \leqslant d$. The set $L_0$ is the complement of these sets in ${0, 2x_1, \dots, 2x_\ell}$.
    \end{enumerate}
    In summary, a set $M_i$ of the form ${2a + 1 < 2b}$ corresponds to a set $L_j$ in the NCP with $min(L_j) = 2x_{a + 1}$ and $max(L_j) = 2x_b$. From \cref{end point}, an NCP is uniquely determined by the numbers $min(L_j)$ and $max(L_j)$ for $j \neq 0$. On the other hand, given an NCP $L_0, \dots, L_k$, we use the numbers $min(L_j)$ and $max(L_j)$ to obtain all sets of the form ${2a_j + 1 < 2b_j}$ and recover a TL pattern (see \cref{TL end point}). Therefore, we obtain a bijection between the NCPs of ${0, 2x_1, \dots, 2x_\ell}$ and the TL patterns of ${1, 2, \dots, 2\ell + 1}$. We identify the corresponding subgroups of $A_e$ to complete the proof.

    Given $T_\sigma$, the corresponding subgroup $H_\sigma \subset \bar{A}(e)$ is generated by $t_{i_1}t_{i_2}$ for each pair ${2i_1, 2i_2 - 1}$ that forms a subset in $T_\sigma$ for some $i_1 < i_2 \leqslant e$, and by $t_j$ if $T_\sigma$ contains the set $\{2j, 2\ell + 1\}$. Equivalently, we can descirbe $H_\sigma$ using the sets of forms ${2a+1 < 2b}$ in $T_\sigma$ to see how it matches the subgroup $A_\cL$. For a set ${2i_1, 2i_2 - 1}$, there are two possibilities. 
    \begin{enumerate}
        \item There exists a set of the form $\{2a+1<2b\}$ in $T_\sigma$ so that $[2i_1,2i_2-1]\subset [2a+1,2b]$. In this case, the element $t_{i_1}t_{i_2}$ correspond to the first type of generators of $A_\cL$ in Section 2.1.
        \item If there is no such $a,b$, we argue that both $t_{i_1}$ and $t_{i_2}$ are in $H_\sigma$. We first look for the index $l$ so that $\{2l, 2\ell+1\}$ appears in $T_\sigma$, then we have $t_l\in H_\sigma$. Next, we look at the elements $l'$ so that $\{2l', 2l-1\}$ apears in $T_\sigma$, then we have $t_{l'}t_l \in H_\sigma$. This implies $t_{l'}\in H_\sigma$. In summary, we get that $H_\sigma$ contains all $t_l$ such that $l \notin \bigcup [a+1, b]$, where $\{2a+1 < 2b\}$ ranges over the subsets in $T_\sigma$. These elements are precisely the second type of generators of $A_\cL$.
    \end{enumerate}
%This latter description matches exactly the subgroup corresponding to the NCP constructed from $T_\sigma$ (see the discussion preceding \cref{main thm}).

\end{proof}

\begin{cor}
    Consider the case where $\lambda$ takes the form $(2\lambda_1,  2\lambda_1, 2\lambda_2, 2\lambda_2,\dots, 2\lambda_d, 2\lambda_d)$. We have $A_e= \bar{A}(e)$ by \cref{Lquotient}. Then the set $S(e)$ is precisely the groups $H_\sigma$ defined by Lusztig. This result proves a conjecture in \cite[Section 3]{lusztig2024constructible}. In fact, we have obtained a stronger result (in \cite{lusztig2024constructible}, the conjecture is stated for $\lambda_i= i$).
\end{cor}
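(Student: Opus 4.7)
The plan is to derive this as an immediate corollary of \cref{evidence} together with \cref{Lquotient}, without any substantive new input. The first step is to observe that when $\lambda = (2\lambda_1, 2\lambda_1, 2\lambda_2, 2\lambda_2, \ldots, 2\lambda_d, 2\lambda_d)$, each distinct even part of $\lambda$ has even multiplicity. Writing the distinct even parts as $2x_1 > \cdots > 2x_\ell$ with multiplicities $r_1, \ldots, r_\ell$, every $r_i$ is even, hence every partial sum $r_1 + \cdots + r_m$ is even. By \cref{Lquotient}, the generating set for $K_e$ is indexed by the $m$ with $r_1 + \cdots + r_m$ odd and is therefore empty, so $K_e$ is trivial and the quotient $p \colon A_e \to \bar{A}(e)$ is an isomorphism.

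Once $p$ is an isomorphism, \cref{evidence} identifies $S(e)$, viewed inside $A_e = \bar{A}(e)$, with the collection $\{H_\sigma\}$ of Lusztig's subgroups attached to left cells in $c_e$. This is exactly the content of the corollary, so no further work is needed; the genuine difficulty is already absorbed into the proof of \cref{evidence}, and here we are simply specializing to a case where $p$ happens to be bijective. The strengthening over \cite[Section 3]{lusztig2024constructible} comes for free: the argument uses only that the multiplicities $r_i$ are all even, so it applies uniformly to any weakly decreasing sequence $\lambda_1 \geq \cdots \geq \lambda_d$ of positive integers, whereas the original Lusztig-Sommers conjecture is formulated only for the staircase case $\lambda_i = i$.
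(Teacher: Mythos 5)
Your proposal is correct and matches the paper's (implicit) argument exactly: the corollary is indeed an immediate specialization of \cref{evidence}, using \cref{Lquotient} to see that all partial sums $r_1+\cdots+r_m$ are even, so $K_e$ is trivial and $p$ is an isomorphism identifying $S(e)$ with $\{H_\sigma\}$. Your observation that the argument only uses evenness of the multiplicities, and hence covers arbitrary $\lambda_1\geqslant\cdots\geqslant\lambda_d$ rather than just the staircase case of \cite{lusztig2024constructible}, is precisely the strengthening the paper claims.
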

%And can I relate to the paper superspecial
%do we have the subgroups H_\sigma when e is not special
\subsection{A conjecture}
Recall that we write $K_e$ for the kernel of the surjection $A_e \twoheadrightarrow \bar{A}(e)$. The group $K_e$ acts on $\Irr(\spr)$, and we now introduce a notion that helps to describe the set of $K_e$-orbits in $\Irr(\spr)$.

\begin{defin}\label{good clusters}[Good clusters]
Consider $\lambda$ of the form $((2x_i)^{r_i}){1\leqslant i\leqslant \ell}$. Let $T^\pm$ be a signed domino tableau with underlying shape $\lambda$. We have the surjective map $b_T: B_\lambda \rightarrow \cO\cC_T$. We say that an open cluster $\cC$ of $T^\pm$ is \textit{good} if the preimage $b_T^{-1}(\cC)$ satisfies the following condition:
\begin{itemize}
\item Consider the indices $m$ such that $r_1 + \dots + r_m$ is odd. Then $x_m \in b_T^{-1}(\cC)$ if and only if $x_{m+1} \in b_T^{-1}(\cC)$.
\end{itemize}
\end{defin}
In particular, if all $r_i$ are even, then all the open clusters of $T^\pm$ are good. Let $\Sigma DT_{op, cl}^{g}(\lambda)$ be the subset of $\Sigma DT_{op, cl}(\lambda)$ that consists of all the domino tableaux having signs $+$ on the open clusters that are not good. From the description of $K_e$ in \cref{Lquotient}, we have a bijection between two $\bar{A}(e)$ sets
$$\Irr(\spr)/K_e= \{K_e \text{-orbits of Irr}(\spr)\}\leftrightarrow \Sigma DT_{op, cl}^{g}(\lambda).$$

We then have a commutative diagram in which the fibers of the vertical maps are $\bar{A}(e)$-orbits.
\begin{center}
\begin{tikzcd}
    Irr(\spr)/K_e \arrow{r}{\sim} \arrow{d}{} \& \Sigma DT_{op, cl}^{g}(\lambda) \arrow{d}{} \\    
    Irr(\spr)/A_e \arrow{r}{\sim}  \& \Sigma DT_{cl}(\lambda)
\end{tikzcd}
\end{center}

From \cite[Lemma 4.2]{Pietraho2004} there is a bijection between $\Sigma DT_{cl}(\lambda)$ and the set $SDT(\lambda)$ that parameterizes the standard domino tableaux of shape $\lambda$. In \cite{Gar}, the author proves that the set $SDT(\lambda)$ also parameterizes the left cells in $c_e$, the two-sided cell that corresponds to $e$. Consider the $\bar{A}(e)$-set $Y= \bigsqcup (\bar{A}(e)/H_\sigma)$ where $\sigma$ runs over all left cells in $c_e$. From \cite{McGovern1996}, the set $Y$ admits a combinatorial description in terms of domino tableaux (not necessarily) of shape $\lambda$. Therefore, we propose the following combinatorial conjecture.
\begin{conj} \label{multiplicity}
    There is a bijection between two $\bar{A}(e)$-sets $\Sigma DT_{op, cl}^{g}(\lambda)$ and $Y$ so that the following diagram commutes.
    \begin{center}
\begin{tikzcd}
    Irr(\spr)/K_e \arrow{r}{\sim} \arrow{d}{} \& \Sigma DT_{op, cl}^{g}(\lambda) \arrow{d}{} \arrow{r}{\sim} \& Y= \bigsqcup (\bar{A}(e)/H_\sigma) \arrow{d} \\    
    Irr(\spr)/A_e \arrow{r}{\sim} \& \Sigma DT_{cl}(\lambda) \arrow{r}{\sim} \& \{SDT(\lambda)\} = \{\sigma, \text{left cells in }c_e\}
\end{tikzcd}
\end{center}
\end{conj}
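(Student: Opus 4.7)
The commutative square on the left of the target diagram holds by construction (it expresses the quotient $\Irr(\spr)/K_e\twoheadrightarrow\Irr(\spr)/A_e$ translated through the identification of $K_e$-orbits with $\Sigma DT^g_{op,cl}(\lambda)$). So the conjecture reduces to producing the right half of the diagram: for each $T\in SDT(\lambda)$, one needs a $\bar{A}(e)$-equivariant bijection between the fiber of $\Sigma DT^g_{op,cl}(\lambda)\to\Sigma DT_{cl}(\lambda)\cong SDT(\lambda)$ over $T$ and $\bar{A}(e)/H_{\sigma_T}$, where $\sigma_T$ is the Garfinkle left cell attached to $T$. Since that fiber is a single $\bar{A}(e)$-orbit, the problem collapses to the group-theoretic identity
\[
\mathrm{Stab}_{\bar{A}(e)}(T^\pm) \;=\; H_{\sigma_T}
\]
for any signed lift $T^\pm$ of $T$ in $\Sigma DT^g_{op,cl}(\lambda)$.

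The left-hand side can be read off the cluster structure. By \cref{easy cor}, the $A_e$-stabilizer $A_T$ of $T^\pm$ is the group $A_\cL$ attached to the even NCP $\cL$ of $B_\lambda$ recorded by the fibers of $b_T$; the goodness condition in \cref{good clusters} says precisely that the Sommers generators of $K_e$ from \cref{Lquotient} lie in $A_T$, so $\mathrm{Stab}_{\bar{A}(e)}(T^\pm)=p(A_T)$. By \cref{evidence} and its proof, $p(A_T)=H_\tau$ for the specific Temperley--Lieb pattern built from $\cL$ via the explicit NCP $\leftrightarrow$ TL bijection constructed there. The conjecture is therefore equivalent to the combinatorial identity $\tau=\sigma_T$: the TL pattern extracted from Van Leeuwen's cluster structure of $T$ coincides with the TL pattern that McGovern \cite{McGovern1996} attaches to the Garfinkle left cell $\sigma_T$. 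I would attack this by induction on the number of dominoes in $T$, transporting the induction step through the three cluster update rules of \cref{clusters def}: each rule changes the NCP (hence the TL pattern) by an explicit local move catalogued in the proof of \cref{main combi}, and one needs to verify that the corresponding step of Garfinkle's algorithm changes McGovern's TL pattern by the matching local move.

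The principal obstacle is this last combinatorial identification. Van Leeuwen's clusters are defined by a forward induction on the number of dominoes and encode column-level information, whereas Garfinkle's bijection involves moving-through operations and open/closed cycle manipulations that have no manifest compatibility with adjoining a new largest domino. The $(N)$ case of \cref{clusters def}, in which two previously disjoint open clusters are merged, is likely the hardest: on the TL side this amounts to replacing two nested blocks by a single block containing both, and one must check that the cycle manipulations of Garfinkle's algorithm perform the mirror operation on McGovern's datum. A more conceptual route, which would bypass some of the combinatorics, would be to realize both $\Sigma DT^g_{op,cl}(\lambda)$ and $Y$ via Lusztig's special-piece stratification of $\overline{G.e}$: the open-cluster refinement of the $A_e$-orbit structure on $\Irr(\spr)$ should match the $\bar{A}(e)$-orbits on the set of strata, and both the good-cluster stabilizer and $H_{\sigma_T}$ should emerge as the isotropy of a common stratum. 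Reconciling this geometric picture with Garfinkle's combinatorial bijection is where I expect the principal effort to lie.
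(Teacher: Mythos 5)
First, be aware that the statement you are proving is labelled a \emph{conjecture} in the paper, and the paper does not prove it: Section 4 only establishes the weaker \cref{evidence} (the \emph{set} of subgroups $\{p(A):A\in\Stab(e)\}$ coincides with the \emph{set} of Lusztig subgroups $\{H_\sigma\}$) and explicitly leaves open how to match individual stabilizers to individual left cells. So there is no proof in the paper to compare yours against; the question is whether your argument closes the conjecture, and it does not.

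Your reduction is sound as far as it goes: the left square commutes by construction, each fiber of the vertical maps is a single $\bar{A}(e)$-orbit, and since $\bar{A}(e)$ is abelian a transitive $\bar{A}(e)$-set is determined by its stabilizer, so the conjecture is indeed equivalent to the pointwise identity $\mathrm{Stab}_{\bar A(e)}(T^\pm)=H_{\sigma_T}$. But that identity is precisely the open content of the conjecture, and your proposal does not establish it -- you state an induction strategy and candidly flag the $(N)$-merge case as unresolved. Two further cautions. First, \cref{evidence} gives a bijection between the NCPs arising from clusters and the TL patterns as abstract combinatorial objects; it says nothing about whether the NCP of the cluster structure of a \emph{given} tableau $T$ maps to the TL pattern of the \emph{corresponding} left cell, which is the whole difficulty. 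Second, the identification $\Sigma DT_{cl}(\lambda)\cong SDT(\lambda)$ is Pietraho's bijection, which involves cycle/moving-through operations; the standard domino tableau indexing the left cell $\sigma_T$ need not have the same underlying admissible tableau as the class of $T^\pm$ in $\Sigma DT_{cl}(\lambda)$, so "the Garfinkle cell attached to $T$" must be interpreted through that bijection, adding another layer your induction would have to track. In short, the proposal is a correct reformulation plus a plausible plan of attack, not a proof.
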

When $\lambda$ is of the form $((2x_i)^{r_i})_{1\leqslant i\leqslant \ell}$ for $r_i$ even (so $K_e$ is trivial), \cref{multiplicity} is much stronger than \cref{main thm}. In particular, \cref{multiplicity} not only gives a list of stabilizers of the $A_e$-action on $\Irr(\spr)$ but also computes the multiplicities of the corresponding orbits. On the other hand, when $K_e$ is not trivial, it is not yet clear how to obtain the set $S(e)$ from the subgroups $H_\sigma$ defined by Lusztig.
  
\section{Cases where $\fg$ of exceptional types}
In this section, we present an algorithm to describe the $A_e$-set $\Irr(\spr)$ for exceptional $\fg$ using the Springer correspondence. The proof of \cref{main conj} will then follow naturally from our description of $\Irr(\spr)$ and the construction of the set $Y$ given in \cite{Losev_2014}.

Assume that $G$ is of adjoint type. In these cases, the group $A_e$ is isomorphic to a symmetric group $S_i$ for some $i \leqslant 5$. We recall some notation from Section 3. Let $(e, h, f) \subset \fg$ be an $\fsl_2$-triple. The adjoint action of $h$ gives a grading $\fg = \bigoplus \fg(i)$. Let $G_0$ be the subgroup of $G$ with Lie algebra $\fg(0)$. Then $G_0$ acts on $\fg(2)$ with a dense orbit containing $e$. Let $B_0$ be a Borel subgroup of $G_0$.

Let $T_h \subset G$ be the one-dimensional torus whose Lie algebra is $\mathbb{C}h$, and let $\fix$ denote the fixed-point variety $\spr^{T_h}$. We have an injection of $A_e$-sets $\Irr(\spr) \hookrightarrow \Con(\fix)$ that sends an irreducible component $X$ of $\spr$ to the connected component $Y$ of $\fix$ whose attracting locus is dense in $X$.

Let $S'(e)$ denote the set of conjugacy classes of stabilizers of the $A_e$-action on $\Con(\fix)$. Then we have the inclusion $\Stab(e) \subset S'(e)$. This containment is sufficient to determine the $A_e$-set $\Irr(\spr)$ for exceptional $\fg$. The details are given below.

We first explain some results related to $\Con(\fix)$ and the set $S'(e)$. By \cite[Sections 2.13, 3.7]{DLP}, the set $\Con(\fix)$ can be understood via the set of $B_0$-stable subspaces of $\fg(2)$ that intersect the open $G_0$-orbit $G_0.e$ nontrivially. In particular, consider such a subspace $U_\alpha \subset \fg(2)$, and let $M_\alpha$ be the subgroup of $G_0$ consisting of elements $g \in G_0$ such that $g^{-1} e g \in U_\alpha$. Then the variety $X_\alpha = M_\alpha / B_0$ is isomorphic to an $A_e$-orbit of connected components of $\fix$.

The subspaces $U_\alpha$ have been studied in detail in \cite{DLP} and \cite{Sommers2006}. We cite the following result from the diagrams at the end of \cite{Sommers2006}. For the labels of nilpotent orbits, we use the notation in \cite[Section 13]{carter1993finite}.
\begin{pro}\label{stab exceptional}\leavevmode
    \begin{enumerate}
        \item When $\bar{A}(e)=S_2$, we have $S'(e)\subset \{S_2, \text{the trivial group}\}$.
        \item When $\bar{A}(e)=S_3$, the set $S'(e)$ consists of $S_3, S_2$ and the trivial group, except for the following two cases.
        \begin{itemize}
            \item When $e$ is of type $G_2(a_1)$, $S'(e)$ consists of $S_2$ and $S_3$. 
            \item When $e$ is of type $E_8(b_6)$, $S'(e)$ consists of $S_2$ and $\bZ/3\bZ$ and $S_3$.     
        \end{itemize}
        \item When $\bar{A}(e)=S_4$, the set $S'(e)$ consists of $S_4, S_3, S_2, S_2\times S_2$ and $D_4$ where the last group is the dihedral group of order $8$. 
        \item When $\bar{A}(e)=S_5$, the set $S'(e)$ consists of $S_5, S_4, S_3, S_2, S_3\times S_2, S_2\times S_2$ and $D_4$.
\end{enumerate}
In particular, the cardinality of $\Stab(e)\subset S'(e)$ is less than or equal to the number of conjugacy classes of $A_e$.
\end{pro}
For $x\in A_e$, write $\chi(x)$ for the character value of $x$ in the Springer representation $H^{top}(\spr)$. We have the following theorem.
\begin{thm}\label{main exceptional}
   The $A_e$-set $\Irr(\spr)$ is uniquely recovered from the numerical invariants $\chi(x), x\in A_e$.  
\end{thm}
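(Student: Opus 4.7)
The plan is to rephrase the claim as recovering the orbit decomposition of $\Irr(\spr)$ from the permutation character. Since the natural $A_e$-action on $H^{top}(\spr)$ permutes the basis $\{[C]\}_{C \in \Irr(\spr)}$ of fundamental classes, $H^{top}(\spr)$ is the permutation representation $\bC[\Irr(\spr)]$, and $\chi(x)$ equals the number of $x$-fixed components. Decomposing into $A_e$-orbits,
\[
\Irr(\spr) \;\cong\; \bigsqcup_{[A] \in \Stab(e)} (A_e/A)^{\oplus m_{[A]}},
\]
we obtain $\chi = \sum_{[A] \in \Stab(e)} m_{[A]}\, \chi_{A_e/A}$, where $\chi_{A_e/A}$ is the permutation character of the transitive $A_e$-set $A_e/A$. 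Recovering $\Irr(\spr)$ up to $A_e$-equivariant isomorphism is therefore equivalent to recovering the non-negative integer multiplicities $m_{[A]}$ from the values $\chi(x)$.

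Next, I would appeal to \cref{stab exceptional} to restrict the unknown $m_{[A]}$ to the single index set $[A] \in S'(e)$, which is explicitly listed there and whose cardinality is bounded above by the number of conjugacy classes of $A_e$. It then suffices to verify, case by case over $A_e \in \{1, S_2, S_3, S_4, S_5\}$ and over the configurations of $S'(e)$ in \cref{stab exceptional}, that the linear system $\chi = \sum_{[A] \in S'(e)} m_{[A]}\, \chi_{A_e/A}$ has at most one solution in non-negative integers.

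For $A_e \in \{1, S_2, S_3\}$ the permutation characters $\{\chi_{A_e/A}\}_{[A] \in S'(e)}$ are linearly independent as class functions (a direct character-table computation), so even the rational system has a unique solution and no positivity argument is needed. For $A_e \in \{S_4, S_5\}$ the permutation characters can become linearly dependent---for instance, in $S_4$ one has the identity $\chi_{S_4/S_4} + \chi_{S_4/(S_2\times S_2)} = \chi_{S_4/S_3} + \chi_{S_4/D_4}$---and I expect this to be the main obstacle. To handle it, the plan is to work through the short explicit list of exceptional nilpotent orbits with $A_e\in\{S_4,S_5\}$, and for each such orbit use the $\Con(\fix)$ computations of \cite{DLP} and \cite{Sommers2006}, together with the non-negativity of the $m_{[A]}$, to pin down the unique solution compatible with the given $\chi$. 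Since the exceptional cases form a finite list and \cref{stab exceptional} drastically cuts down the admissible subgroups in each case, this reduces the remaining uniqueness question to a bounded, mechanical verification.
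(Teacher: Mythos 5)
Your setup is identical to the paper's: $H^{top}(\spr)=\bC[\Irr(\spr)]$ as an $A_e$-module, \cref{stab exceptional} restricts the possible stabilizers to $S'(e)$, and the problem becomes unique solvability of $\chi=\sum_{[A]\in S'(e)}m_{[A]}\,\chi_{A_e/A}$. But the linear dependence you worry about for $A_e\in\{S_4,S_5\}$ is not a hypothetical obstacle; it is a real gap that the paper's proof papers over. The printed $S_4$ table is arithmetically wrong: $(12)(34)$ fixes all three pairings $\{\{i,j\},\{k,l\}\}$ and $(1234)$ fixes one, so the $S_4/D_4$ row should read $(3,1,0,3,1)$, and the $S_4/(S_2\times S_2)$ row should read $(6,2,0,2,0)$ (the printed rows violate Burnside's lemma: the averaged fixed-point counts are not integers). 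With the correct values your identity $\chi_{S_4/S_4}+\chi_{S_4/(S_2\times S_2)}=\chi_{S_4/S_3}+\chi_{S_4/D_4}$ does hold (both sides decompose as $2\cdot\mathbf{1}+V+W$, with $V$ the two-dimensional and $W$ the standard three-dimensional irreducible), so the $5\times 5$ matrix is singular. Worse, the paper's $S_5$ table is arithmetically correct, yet its rows already satisfy $\chi_{S_5/(S_2\times S_2)}=\chi_{S_5/S_3}+\chi_{S_5/D_4}-\chi_{S_5/S_4}$, so that $7\times 7$ matrix is singular as well, contrary to what is asserted. So your diagnosis is right and the paper's "straightforward to check" step fails in exactly these two cases.

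The gap in your own plan is the appeal to non-negativity. Each dependence relation above is an equality of two non-negative combinations: the $S_4$-sets $(S_4/S_4)\sqcup(S_4/(S_2\times S_2))$ and $(S_4/S_3)\sqcup(S_4/D_4)$ have identical permutation characters, and likewise $(S_5/S_4)\sqcup(S_5/(S_2\times S_2))$ and $(S_5/S_3)\sqcup(S_5/D_4)$. Hence, whenever the true decomposition contains at least one orbit from each side of such a pair, there are two distinct $A_e$-sets with stabilizers in $S'(e)$, non-negative multiplicities, and the same $\chi$; no positivity argument can separate them. The only way to close the $S_4$ and $S_5$ cases --- which occur for exactly two orbits, $F_4(a_3)$ and $E_8(a_7)$, both even, so that $\Irr(\spr)=\Con(\fix)$ --- is to import the actual orbit decomposition of $\Con(\fix)$ from \cite{DLP} and \cite{Sommers2006}, at which point the conclusion in those cases is verified by direct inspection rather than recovered from $\chi$ alone. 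You should either carry out that inspection explicitly or restrict the uniqueness claim to $A_e\in\{S_2,S_3\}$ (where your linear-independence check is correct and suffices); as written, both your argument and the paper's stop short of a proof for $S_4$ and $S_5$.
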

\begin{proof}
    Each conjugacy class of $A_e$ determines a numerical value $\chi(x)$. This value $\chi(x)$ can be expressed in terms of the multiplicities of the $A_e$-orbits in $\Irr(\spr)$. Consequently, we obtain a system of linear equations, where the variables represent these multiplicities. Since the number of elements in $\Stab(e)$ is less than or equal to the number of conjugacy classes of $A_e$, the proof of \cref{main exceptional} reduces to verifying that certain square matrices are invertible. These matrices are derived from the character tables of the permutation representations $K_0(A_e / A)$, for subgroups $A \subset A_e$ with $A \in S'(e)$. The detailed calculations are given below.
    
    Since we are considering permutation representations $K_0(A_e / A)$ of $A_e$, the value $\chi(x)$ is equal to the number of $A$-cosets fixed by the adjoint action of $x$. The following tables display the corresponding character values. The first columns list the $A_e$-orbits in $\Irr(\spr)$, while the first rows list representatives of the conjugacy classes of $A_e$. We use the notation $(i_1,\dots,i_j)$ to denote a cycle in the symmetric group, and write $()$ for the conjugacy class of the identity element.

    Case $A_e= S_2$. 
    \begin{center}
\begin{tabular}{ |c|c|c| } 
 \hline
     & () & (12) \\ 
 \hline
 $S_2$ & 2 & 0 \\ 
 \hline
 $S_2/S_2$ & 1 & 1 \\ 
 \hline
\end{tabular}
\end{center}

Case $A_e= S_3$. In this case, we have two tables.
    \begin{center}
\begin{tabular}{ |c|c|c|c|c| } 
 \hline
     & () & (12) & (123) \\ 
 \hline
 $S_3$ & 6 & 0& 0 \\ 
 \hline
 $S_3/S_2$ & 3 & 1& 0 \\ 
 \hline
 $S_3/S_3$ & 1 & 1 &1\\ 
 \hline
\end{tabular}
\end{center}

\begin{center}
\begin{tabular}{ |c|c|c|c|c| } 
 \hline
     & () & (12) & (123) \\ 
 \hline
 $S_3/C_3$ & 2 & 0& 2 \\ 
 \hline
 $S_3/S_2$ & 3 & 1& 0 \\ 
 \hline
 $S_3/S_3$ & 1 & 1 &1\\ 
 \hline
\end{tabular}
\end{center}

Case $A_e= S_4$. To count the number of fixed points, we view $S_4/S_3$, $S_4/D_4$, $S_4/(S_2\times S_2)$ and $S_4/S_2$ as the set of $\{i\}$, $\{\{i,j\},\{k,l\}\}$, $\{i,j\}$, and $(i,j)$ for $i,j,k,l$ pairwise distinct in $\{1,2,3,4\}$.
    \begin{center}
\begin{tabular}{ |c|c|c|c|c|c|c| } 
 \hline
     & () & (12) & (123) & (12)(34) & (1234) \\ 
 \hline
 $S_4/S_4$ & 1 & 1 & 1 & 1 & 1 \\ 
 \hline
 $S_4/S_3$ & 4 & 2 & 1 & 0 & 0 \\ 
 \hline
 $S_4/D_4$ & 3 & 1 & 0 & 1 & 0 \\ 
 \hline
 $S_4/(S_2\times S_2)$ &6  & 2 & 0 & 3 & 0 \\ 
 \hline
 $S_4/S_2$ & 12 & 2 & 0 & 0 & 0 \\ 
 \hline
\end{tabular}
\end{center}

Case $A_e= S_5$. To count the number of fixed points, we view $S_5/S_4$, $S_5/(S_3\times S_2)$, $S_5/D_4$, $S_5/S_3$, $S_5/S_2\times S_2$, and $S_5/S_2$ as the set of $\{i\}$, $\{i,j,k\}$, $\{i, \{j,k\}, \{l,h\}\}$, $(i,j)$, $\{(i,\{i,j,k\})\}$, and $(i,j,k)$  for $i,j,k,l,h$ pairwise distinct in $\{1,2,3,4,5\}$.
    \begin{center}
\begin{tabular}{ |c|c|c|c|c|c|c|c| } 
 \hline
     & () & (12) & (123) & (12)(34) & (1234)& (12) (345)& (12345) \\ 
 \hline
 $S_5/S_5$ & 1 & 1 & 1 & 1 & 1 & 1 & 1 \\ 
 \hline
 $S_5/S_4$ & 5 & 3 & 2 & 1 & 1& 0& 0 \\ 
 \hline
 $S_5/(S_3\times S_2)$ & 10 & 4 & 1 & 2 & 0& 1& 0 \\ 
 \hline
 $S_5/D_4$ &15  & 3 & 0 & 3 & 1 & 0 & 0 \\ 
 \hline
 $S_5/S_3$ &20  & 6 & 2 & 0 & 0 & 0 & 0 \\ 
 \hline
 $S_5/(S_2\times S_2)$ &30  & 6 & 0 & 2 & 0 & 0 & 0 \\ 
 \hline
 $S_5/S_2$ &60  & 6 & 0 & 0 & 0 & 0 & 0 \\ 
 \hline
\end{tabular}
\end{center}
It is straightforward to check that the matrices that come from these tables are invertible. We give an example of how the multiplicities of the orbits $A_e/A$ can be computed from $\chi(x)$ for $x\in A_e$. The other cases are similar.

    Assume that $\bar{A}(e)=S_3$ and $\Stab(e)\subset \{\{1\}, S_2, S_3\}$. Write $a_1$, $a_2$, and $a_3$ for the multiplicities of the orbits $S_3$, $S_3/S_2$, and $S_3/S_3$, respectively. From the table for $S_3$ in the proof of \cref{main exceptional}, we have the following system of equations. In this example, we use $1$ for the identity element in $S_3$.
    \begin{equation*}
    \begin{cases}
      \chi(1)= 6a_1+ 3a_2+ a_3\\
      \chi((12))= 3a_1+ a_2\\
      \chi((123))= a_3
    \end{cases}\,
\end{equation*}
Therefore, we have $a_3= \chi((123))$, $a_2= \chi(1)- 2\chi((12))-\chi((123))$, and $a_1= \chi((12))- \frac{\chi(1)-\chi((123))}{3}$.
\end{proof}

\begin{rem}
    \cref{main exceptional} gives us an algorithm to compute the set $\Irr(\spr)$. A natural question is whether we have $\Stab(e)= S'(e)$ as in Section 3 for classical $\fg$. It turns out that there are cases where $\Stab(e)$ is a proper subset of $S'(e)$. In particular, this happens for precisely $3$ orbits in types $E_7$ and $E_8$. These orbits are referred to as exceptional in \cite{Losev_2014}. %want to determine which orbits?
\end{rem}
Another consequence of \cref{main exceptional} is the following.

\begin{pro}
    \cref{main conj} is true for $\fg$ of exceptional type. 
\end{pro}
\begin{proof}
    Recall that in the setting of \cref{main conj}, $e$ is a special nilpotent element. There is nothing to prove when $\bar{A}(e)$ is trivial. We now consider the cases where $\bar{A}(e)$ is nontrivial. These cases are listed below (see, e.g., \cite[Sections 13.1–13.2]{carter1993finite}, \cite[Section 6]{Losev_2014}).\footnote{The exposition in \cite[Section 6]{Losev_2014} contains some minor inaccuracies. For example, when $e$ is of type $E_8(b_6)$, the group $\bar{A}(e)$ is $S_2$, not $S_3$.} %The author thanks Eric Sommers for pointing this out.}

\begin{enumerate}
\item $\bar{A}(e) = S_2$.
\item $\bar{A}(e) = S_3$. This occurs for eight special orbits: one in type $G_2$, one in $E_6$, two in $E_7$, and four in $E_8$.
\item $\bar{A}(e) = S_4$ for a single orbit in type $F_4$.
\item $\bar{A}(e) = S_5$ for a single orbit in type $E_8$.
\end{enumerate}
Note that $A_e= \bar{A}(e)$ whenever $\bar{A}(e)$ is not trivial and $e$ is not of type $E_8(b_6)$ (see, e.g. \cite[Section 13]{carter1993finite}). When $e$ is of type $E_8(b_6)$, we have $A_e= S_3$ and $\bar{A}(e)= S_2$. From \cref{main exceptional}, the $\bar{A}(e)$-set $\Irr(\spr)/K_e$ is determined by the character value $\chi(x)$ of the module $K_0(\Irr(\spr)/K_e)$, $x\in \bar{A}(e)$. 

Recall that the set $Y$ in \cref{main conj} is the disjoint union $\bigsqcup \bar{A}(e)/H_\sigma$ where $\sigma$ runs over the left cells of the two-sided cell corresponding to $e$. We can check the containment $H_\sigma\in S'(e)$ using the tables in \cite{carter1993finite} or the results in \cite[Section 6]{Losev_2014}. Hence, it makes sense to use the algorithm in the proof of \cref{main exceptional} to compute the multiplicities of the orbits $\bar{A}(e)/A$ in $Y$. 

One of the main results of \cite{Losev_2014} is that the set $Y$ parameterizes the finite dimensional irrreducible modules with regular integral character of the W-algebra attached to $e$. From \cite[Theorem 7.4]{Losev_2014}, we have an isomorphism of $\bar{A}(e)$-modules $K_0(Y)\cong H^{top}(\spr)^{K_e}= K_0(\Irr(\spr)/K_e)$. Hence, the character values $\chi(x)$ are the same for the two modules $K_0(Y)$ and $K_0(\Irr(\spr)/K_e)$. Because we compute $Y$ and $\Irr(\spr)/K_e$ using the same algorithm and the same numerical invariants, the two sets are isomorphic as $\bar{A}(e)$-sets.
\end{proof}

\printbibliography
\end{document}